\newcolumntype{L}{>{$}l<{$}} % math-mode version of "l" column type
\newcolumntype{C}{>{$}c<{$}} % math-mode version of "l" column type
\newcolumntype{R}{>{$}r<{$}} % math-mode version of "l" column type
\newtheorem{thm}{Theorem}[section]
\newtheorem{prop}[thm]{Proposition}
\newtheorem{lemma}[thm]{Lemma}
\newtheorem{cor}[thm]{Corollary}
\newtheorem{conj}[thm]{Conjecture}
\theoremstyle{definition}
\newtheorem{defin}[thm]{Definition}
\newtheorem{rmk}[thm]{Remark}
\newcommand{\normtext}[1]{\text{\normalfont{#1}}}
\newcommand{\oddO}{H}
\newcommand{\expDim}[2]{\normtext{expDim}(#1,#2)}
\newcommand{\econe}[2]{\overline{\Gamma}_{#1}(#2)}
\title{Maximal rank subgroups and strong functoriality of the additive eigencone}
\author{Michael Schuster}
\date{}
\begin{document}
\begin{abstract}
Let $G$ be a simple connected complex Lie group.  The additive eigencone $\econe{n}{G}$ is a polyhedral cone
containing the set of solutions to the additive eigenvalue problem, a generalization of the Hermitian
eigenvalue problem.  The additive eigencone is functorial, and for certain subgroups satisfies a
stronger functoriality property: the eigencone of the subgroup is determined by the inequalities
of the larger eigencone.  Belkale and Kumar first studied this property for subgroups
invariant under a diagram automorphism of $G$.  We study a new class of subgroups 
arising from centralizers of torus elements that have
the strong eigencone functoriality property.
\end{abstract}
%%%%%%%%%%%%%%  INTRODUCTION  %%%%%%%%%%%%%%
\maketitle
\section{Introduction}\label{IntroSec}
For any Hermitian matrix $A$, let $\lambda$ denote its set of eigenvalues.  Given 
two sets of eigenvalues $\lambda$ and $\mu$, the Hermitian eigenvalue problem
asks: For which sets of eigenvalues $\nu$ do there exist Hermitian matrices $A$, $B$,
and $C$ such that $A+B = C$?  The solution is that the set of tuples of eigenvalues
$(\lambda, \mu, \nu)$ satisfying the Hermitian eigenvalue problem forms a convex
polyhedral cone, whose facets are parametrized by certain cohomology products.
This problem has a long history, starting with the work of Weyl \cite{WEYL}, which led to
Horn's conjectures about inequalities determining the facets of this cone \cite{HRN62},
which were proven by the combined work of Klyachko \cite{KLY98} and Knutson-Tao \cite{KNTAO}.
For more on the history of this problem and the methods used to solve it,
see Fulton's survey \cite{FEIG}.

The Hermitian eigenvalue problem can be generalized both to an arbitrary number of matrices
and to an arbitrary connected semisimple complex algebraic group $G$ (for a survey on this generalized problem
see \cite{AEPS}).  The set of solutions
of this problem again forms a convex polyhedral cone $\econe{n}{G}$ called the 
\emph{additive eigencone}, and contains the saturated tensor semigroup as a subset.  One of the properties of the additive eigencone is that it is functorial: 
given a group homomorphism $M \rightarrow G$ which maps the maximal compact subgroup
of $M$ into the maximal compact subgroup of $G$, we get a piecewise-linear map $\econe{n}{M} \rightarrow \econe{n}{G}$ \cite{KLMJ09}.
In \cite{BKISO} Belkale and Kumar studied subgroups of $\text{SL}(r+1)$ that exhibited a stronger
functoriality property.  An eigencone $\econe{n}{G}$ is a subcone of the cone of tuples
dominant coweights $\mathfrak{h}_{+,G}^n$, and for a subgroup $M \subseteq G$
the map of eigencones $\econe{n}{M} \rightarrow \econe{n}{G}$ is the restriction of a map
$\phi:\mathfrak{h}_{+,M}^n \rightarrow \mathfrak{h}_{+,G}^n$.  The strong functoriality
property is the condition that $\vec{\lambda} \in \econe{n}{M}$ if and only if $\phi(\vec{\lambda}) \in \econe{n}{G}$.
Belkale and Kumar proved that $\text{SO}(2r+1) \subseteq \text{SL}(2r+1)$ and 
$\text{Sp}(2r) \subseteq \text{SL}(2r)$ have the strong eigencone functoriality property,
and conjectured that any subgroup that is the fixed subgroup of a diagram automorphism
of $G$ will also have this property.  This conjecture was resolved by the work of Braley and Lee \cite{BRTHS,LEETHS}.

In this article we study the strong functoriality property for groups arising from inner automorphisms.
We say that a subgroup $M \subseteq G$ \emph{induces a sub-eigencone} if
\begin{itemize}
\item The induced map $\mathfrak{h}_{+,M} \rightarrow \mathfrak{h}_{+,G}$ is an isometric embedding and,
\item The map of eigencones $\econe{n}{M} \rightarrow \econe{n}{G}$ has the strong functoriality property.
\end{itemize}
Then our main theorem is the following.

\begin{thm} \label{MainThm}
For any $n \geq 3$ the following subgroups induce sub-eigencones.  (We describe the subgroups using the simple roots 
$\alpha_1, \ldots, \alpha_r$ of the larger group.)
\begin{enumerate}
\item The subgroup $\normtext{SO}(2r-1) \subseteq \normtext{SO}(2r+1)$ fixed by the diagram automorphism of the subgroup $\normtext{SO}(2r)$
corresponding to the type $\normtext{D}_r$ sub-root-system of $\normtext{SO}(2r+1)$.
\item The subgroup $\normtext{Sp}(2r-2) \subseteq \normtext{Sp}(2r)$ corresponding to the type $\normtext{C}_{r-1}$ 
sub-root-system with simple roots $\beta_1 = \alpha_1, \ldots,\beta_{r-2} = \alpha_{r-2}, \beta_{r-1} = 2\alpha_{r-1}+\alpha_r$.
\item The subgroup $\normtext{SO}(2r-3) \subseteq \normtext{SO}(2r)$ contained (as above) in the subgroup $\normtext{SO}(2r-1)$
fixed by the diagram automorphism of $\normtext{SO}(2r)$.
\item The subgroup $\normtext{SL}(2) \subseteq \normtext{G}_2$ corresponding to the sub-root-system with
simple root $\beta = 3\alpha_1 + 2\alpha_2$.
\item The subgroup $\normtext{G}_2 \subseteq \normtext{F}_4$ obtained as follows: the group $\normtext{F}_4$ contains
a subgroup of type $\normtext{B}_4$ corresponding to the sub-root-system with simple roots 
$\beta_1 = \alpha_2+2\alpha_3+2\alpha_4, \beta_2 = \alpha_1,  \beta_3 = \alpha_2, \beta_4 = \alpha_3$; the 
$\normtext{B}_4$ subgroup contains a type $\normtext{D}_4$ subgroup as above; 
finally the $\normtext{G}_2$ subgroup is the subgroup fixed by the diagram automorphism of $\normtext{D}_4$.
\end{enumerate}
\end{thm}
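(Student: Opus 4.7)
Since every subgroup in the list is of maximal rank, each arises (up to finite index) as the identity component of the centralizer of an element of the maximal torus of $G$, so the Cartan of $M$ sits inside the Cartan of $G$ and an induced positive system of roots of $M$ is a subset of the positive roots of $G$. I would first treat this maximal-rank centralizer setup abstractly: prove a general structural result which says that if $M$ is the identity component of the centralizer of a torus element $t \in T \subseteq G$ and a certain ``comparison'' between the Belkale--Kumar products on the relevant partial flag varieties holds, then $M$ induces a sub-eigencone. Only after that would I go case by case and verify the comparison for each of the five subgroups listed.

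\textbf{The key machinery.} The eigencone $\econe{n}{G}$ is cut out by the Belkale--Kumar inequalities, one for each Levi-movable triple of Schubert classes whose deformed product $\odot_0$ on $G/P$ contains the class of a point with coefficient $1$, as $P$ ranges over the maximal parabolics. The plan is to compare, for each maximal parabolic $Q \subseteq M$, the deformed cohomology $(H^*(M/Q),\odot_0)$ with $(H^*(G/P),\odot_0)$ for a well-chosen maximal parabolic $P \subseteq G$ containing $Q$. Concretely, I would show that the pullback (or a suitably normalized transfer) of the Schubert basis of $G/P$ under the inclusion $M/Q \hookrightarrow G/P$ sends each ``Levi-movable unit-coefficient'' triple on $M/Q$ to one on $G/P$, and moreover that the inequality coming from the $G/P$ triple, when restricted to $\mathfrak{h}_{+,M}^n$ via $\phi$, is exactly the $M/Q$ inequality. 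This would give that every facet of $\econe{n}{M}$ is the intersection of $\mathfrak{h}_{+,M}^n$ with a facet of $\econe{n}{G}$, from which strong functoriality follows immediately.

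\textbf{Implementation and the obstacle.} Concretely I would use the tangent space description of Levi-movability and the Kostant--Kumar comparison of Schubert classes under an embedding of flag varieties of the same rank. Because $M$ and $G$ share a maximal torus, Schubert varieties of $M/Q$ are, combinatorially, indexed by cosets in $W_M/W_Q$ which inject into $W_G/W_P$, and the tangent weights on both sides differ only by roots of $G$ that are not roots of $M$; these ``extra'' roots are precisely the ones that carry the deformation, so they should be controllable. The main obstacle I anticipate is exactly this cohomological matching step: verifying that the Belkale--Kumar structure constants on $M/Q$ equal, or cleanly embed into, those on $G/P$ after the deformed product is applied. In general the deformed product is not functorial, so the argument must exploit specific properties of the maximal-rank embedding (pairs of roots with equal restriction to $\mathfrak{h}_M$, Weyl group combinatorics of the pair $(W_M,W_G)$) to force the extra structure constants to behave. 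The isometric embedding condition is comparatively easy: for each of the five pairs, it reduces to a direct computation that the $G$-invariant form restricted to $\mathfrak{h}_M$ is a positive multiple of the $M$-invariant form, which I would just verify using the explicit root data supplied in the statement.

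\textbf{Case-by-case verification.} Finally, for each of cases (1)--(5) I would (i) realize $M$ explicitly as the centralizer of a torus element of $G$ by identifying the element whose centralizer has the given Dynkin data, (ii) check the isometric embedding as above, and (iii) run the general comparison of Belkale--Kumar products. For (5), the chain $\mathrm{G}_2 \subset \mathrm{D}_4 \subset \mathrm{B}_4 \subset \mathrm{F}_4$ combines the Braley--Lee diagram-automorphism result (already granted) with the maximal-rank steps, so I would apply the general criterion only to the $\mathrm{B}_4 \subset \mathrm{F}_4$ and $\mathrm{D}_4 \subset \mathrm{B}_4$ stages and then compose with the known diagram-automorphism case to conclude. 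The analogous chaining handles (1) and (3).
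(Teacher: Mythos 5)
Your overall framework --- reduce strong functoriality to a statement comparing Levi-movable products $\sigma^M_{w_1}\cdots\sigma^M_{w_n}=1[\mathrm{pt}]$ on $M/Q$ with the corresponding products on $G/P$, using the Belkale--Kumar/Ressayre description of the facets of $\econe{n}{M}$ and an isometric embedding of dominant chambers --- is indeed the paper's mechanism (its Corollary \ref{SubconeCor}; note it is even a bit cheaper than what you aim for: one only needs the $G/P$ product to be \emph{nonzero}, so that the Berenstein--Sjamaar inequality for $G$ restricts to the needed facet inequality for $M$; no claim that facets map to facets is required). But the proposal stops exactly where the paper's work begins. The ``cohomological matching step'' that you flag as the anticipated obstacle is the entire content of Theorem \ref{CohomThm}, and it does not follow from any general property of maximal-rank centralizers: indeed the intermediate maximal-rank inclusions $\mathrm{D}_4\subseteq\mathrm{B}_4$ and $\mathrm{B}_4\subseteq\mathrm{F}_4$ exhibit folding and do \emph{not} induce sub-eigencones, so the ``general structural result'' you want to prove once and for all is false at the level of generality you state it. For the type C case the paper must (i) prove the expected dimension of the $G/P$ intersection is zero, which uses \emph{both} hypotheses (Levi-movability and coefficient one) and a detour through the group $\mathrm{SO}(2r-1)\times\mathrm{SL}(2)$ together with the B/C eigencone isomorphism (Propositions \ref{ChiHProp}, \ref{BCProdDual}, \ref{ExpDimProp}); and (ii) prove nonvanishing by a transversality argument using that $\mathrm{Sp}(2r-2)\times\mathrm{SL}(2)$ acts on $\mathrm{IG}(k,2r)$ with four orbits, where the key subtlety is that generic translates by $M$ alone do \emph{not} make the intersection proper --- one Schubert variety must first be displaced by a special element of $G$ (the modified flag $E_\bullet(I)$) before the orbit-by-orbit dimension count closes. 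None of this is supplied or even sketched in your proposal.

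The case-by-case reductions are also partly wrong. Your plan for (5) --- apply the general maximal-rank criterion to $\mathrm{B}_4\subseteq\mathrm{F}_4$ and $\mathrm{D}_4\subseteq\mathrm{B}_4$ and compose with the Braley--Lee result for $\mathrm{G}_2\subseteq\mathrm{D}_4$ --- cannot work, because those two intermediate inclusions fail the very property you would be composing: the dominant chamber of $\mathrm{D}_4$ is not contained in that of $\mathrm{B}_4$, the eigencone map folds, and no sub-eigencone is induced (this is precisely why the paper's conjecture is only ``along the folds''). The paper instead treats $\mathrm{G}_2\subseteq\mathrm{F}_4$ directly: by Lee's computation every Levi-movable product equal to $1[\mathrm{pt}]$ on a $\mathrm{G}_2$ flag variety is a class paired with its dual, so it suffices to check that the embedding $W^{Q}_{\mathrm{G}_2}\hookrightarrow W^{P}_{\mathrm{F}_4}$ commutes with Poincar\'e duality, verified by explicit Weyl-group (LiE) computations. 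Likewise (1) is not obtained by chaining through $\mathrm{D}_r$ but from case (2) via the isomorphism of the type B and C eigencones (identical Coxeter systems); only (3) is proved by the kind of composition you describe, chaining (1) with Braley's theorem. So the proposal as written has two genuine gaps: the missing geometric proof of the product comparison in the type C case, and an invalid reduction for cases (1) and (5).
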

\begin{rmk}
The property of inducing a sub-eigencone is transitive, so that, for example, the above theorem implies
that $\econe{n}{\normtext{SO}(2r+1)}$ contains sub-eigencones of type $\text{B}_s$ for any $s<r$.
\end{rmk}

The images of these sub-eigencones are also interesting.  The normalized Killing form defines an isomorphism
between $\mathfrak{h}$ and $\mathfrak{h}^*$.  Therefore we can describe subcones of an eigencone in
terms of the fundamental weights $\omega_1, \ldots, \omega_r \in \mathfrak{h}^*$ of $G$.  In each case of the above
theorem, the images of the sub-eigencones can be described as subcones of $\econe{n}{G}$
obtained by setting the coefficients of fundamental weights to zero.

\begin{cor}\label{MainCor}
For any $n \geq 3$ we have the following. In each case we denote by $\nu_i$ the fundamental weights of the
smaller group $M$.
\begin{enumerate}
\item For any $1 \leq s < r$, the subcone of $\econe{n}{\normtext{Sp}(2r)}$ in which the coefficients of $\omega_{s+1}, \ldots, \omega_r$ are zero
for every weight $\lambda_i$ in $\vec{\lambda}$ is isomorphic to $\econe{n}{\normtext{Sp}(2s)}$.  The weights satisfy
$(\sum_{i=1}^s a_i \omega_i)_{|M} = \sum_{i=1}^s a_i \nu_i$.  The same is true for 
$\econe{n}{\normtext{SO}(2s+1)} \subseteq \econe{n}{\normtext{SO}(2r+1)}$, but in this case 
the weights satisfy
$(\sum_{i=1}^s a_i \omega_i)_{|M} = \sum_{i=1}^{s-1} a_i \nu_i + 2a_s \nu_s$.
\item The subcone of $\econe{n}{\normtext{SO}(2r)}$ in which the coefficients of 
$\omega_{r-1}$, and $\omega_r$ are zero
for every weight is isomorphic to $\econe{n}{\normtext{SO}(2r-3)}$.  The weights satisfy
($\sum_{i=1}^{r-2} a_i \omega_i)_{|M} = \sum_{i=1}^{r-3} a_i \nu_i + 2a_{r-2}\nu_{r-2}$.
\item The subcone of $\econe{n}{\normtext{G}_2}$ in which the coefficients of $\omega_1$ are zero
for every weight is isomorphic to $\econe{n}{\normtext{SL}(2)}$.  The weights satisfy
$ (a \omega_2)_{|M} =  2a \nu$.
\item The subcone of $\econe{n}{F_4}$ in which the coefficients of $\omega_3$ and $\omega_4$ are zero
for every weight is isomorphic to $\econe{n}{\normtext{G}_2}$.  The weights satisfy
$ (a \omega_1 + b \omega_2)_{|M} =  3b \nu_1 + a \nu_2$.
\end{enumerate}
\end{cor}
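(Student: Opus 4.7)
The plan is to derive Corollary \ref{MainCor} from Theorem \ref{MainThm} by an explicit root-system computation in each case. By the theorem, for each pair $M \subseteq G$ the Killing-form dual $\phi\colon \mathfrak{h}_M^{\ast} \hookrightarrow \mathfrak{h}_G^{\ast}$ of the isometric embedding $\iota\colon \mathfrak{h}_M \hookrightarrow \mathfrak{h}_G$ carries $\econe{n}{M}$ bijectively onto its image, and by strong functoriality the image is $\phi(\mathfrak{h}_{+,M}^n) \cap \econe{n}{G}$. The corollary therefore reduces to two tasks: (a) identify the linear subspace $\phi(\mathfrak{h}_M^{\ast}) \subseteq \mathfrak{h}_G^{\ast}$ as the coordinate subspace cut out by the claimed vanishing of $\omega_i$-coefficients, and (b) compute the inverse of $\phi$ on its image, which coincides with the ordinary restriction $\omega \mapsto \omega|_M$ (since $\iota$ is an isometry, $\iota^t \circ \phi = \mathrm{id}$).

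For each case I would first write down the simple roots $\beta_1, \ldots, \beta_s$ of $M$ as explicit linear combinations of the simple roots $\alpha_1, \ldots, \alpha_r$ of $G$, iterating the theorem where needed: Corollary case (1) comes from iterating Theorem cases (1) and (2), producing $\beta_j = \alpha_j$ for $j<s$ together with $\beta_s = \alpha_s + \cdots + \alpha_r$ in the type-$\normtext{B}$ case and $\beta_s = 2\alpha_s + \cdots + 2\alpha_{r-1} + \alpha_r$ in the type-$\normtext{C}$ case, while Corollary case (4) requires composing the three embeddings in Theorem case (5). The image $\phi(\mathfrak{h}_M^{\ast})$ is the span of the $\beta_j$, and matching it with $\mathrm{span}(\omega_{i_1}, \ldots, \omega_{i_s})$ amounts to checking that the ``excluded'' simple coroots $\alpha_\ell^{\vee}$ are precisely the coroots orthogonal to $\iota(\mathfrak{h}_M)$; this is a direct computation from the explicit $\beta_j$, since a weight of $G$ has $\omega_\ell$-coefficient zero exactly when it pairs to zero with $\alpha_\ell^{\vee}$.

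For (b), the formula $(\sum a_i \omega_i)|_M = \sum_{i,j} a_i \langle \omega_i, \beta_j^{\vee} \rangle \nu_j$ reduces the problem to computing the pairings $\langle \omega_i, \beta_j^{\vee} \rangle = 2(\omega_i, \beta_j)/(\beta_j, \beta_j)$ from the normalized Killing form of each root system. The various integer coefficients in the corollary — the $2$'s in the $\normtext{SO}$ and case-(3) formulas, and the $3$ in case (4) — are direct outputs of these squared-length ratios once the $\beta_j$ are written down. The main obstacle is bookkeeping in case (4), where the simple roots of $\normtext{G}_2$ must be tracked through three successive embeddings ($\normtext{F}_4 \supseteq \normtext{B}_4 \supseteq \normtext{D}_4 \supseteq \normtext{G}_2$) with shifting squared-length normalizations, and in particular the triality step from $\normtext{D}_4$ to $\normtext{G}_2$ produces the factor $3$. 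One last check is that every coefficient $\langle \omega_i, \beta_j^{\vee} \rangle$ appearing in the formulas is non-negative, so that $\phi$ actually sends the dominant chamber of $M$ onto the whole dominant part of the coordinate subspace rather than a proper piece; inspecting the formulas, this is immediate.
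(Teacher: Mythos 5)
Your proposal is correct and takes essentially the same route as the paper: the paper also deduces the corollary from Theorem \ref{MainThm} together with the explicit weight-restriction formulas computed in that theorem's proof (e.g. $\nu_i=\omega_i$ for $i\le s$ and $\nu_i=\omega_i-\omega_s$ for $i>s$ in type C, the type B/C correspondence $\omega_i^C=\omega_i^B$ for $i<r$, $\omega_r^C=2\omega_r^B$, and the $\mathrm{G}_2\subset \mathrm{F}_4$ formula $3b\nu_1+a\nu_2$), identifying the image of the dominant chamber of the relevant simple factor with the face of the dominant chamber of $G$ where the excluded $\omega_i$-coefficients vanish. Your uniform packaging via the pairings $\langle \omega_i,\beta_j^{\vee}\rangle$ and the orthogonality of the excluded simple coroots to $\iota(\mathfrak{h}_M)$ is just a systematic way of carrying out the same bookkeeping.
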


In types B and C, we additionally have a projection from the larger eigencone to the smaller eigencone described
above.

\begin{thm}\label{ProjThm}
For any $1 \leq s < r$, there exists a natural projection $\pi: \econe{n}{\normtext{Sp}(2r)} \rightarrow \econe{n}{\normtext{Sp}(2s)}$
such that $\pi$ is surjective and the inclusion map $\iota: \econe{n}{\normtext{Sp}(2s)} \rightarrow \econe{n}{\normtext{Sp}(2r)}$ 
is a section of $\pi$.  The map $\pi$ is given by
\[
\sum_{i=1}^r a_i \omega_i \mapsto \sum_{i=1}^{s-1} a_i \nu_i + \left(\sum_{i=s}^r a_i \right) \nu_s.
\]
The same holds for $\normtext{SO}(2r+1)$.
\end{thm}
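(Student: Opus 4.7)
The plan is to reduce to the one-step case $s = r-1$ by transitivity, verify the explicit formula and the section identity by direct computation with restrictions of fundamental weights, and then deduce the well-definedness of $\pi$ from the strong functoriality of Theorem~\ref{MainThm}.

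By the transitivity remark following Theorem~\ref{MainThm}, it suffices to prove the theorem for $s = r-1$; the general statement follows by composing one-step projections. For $G = \normtext{Sp}(2r)$ and $M = \normtext{Sp}(2r-2)$, the Cartan $\mathfrak{h}_M$ sits isometrically inside $\mathfrak{h}_G$ as the span of $e_1,\ldots,e_{r-1}$, and under the normalized Killing form identifications the restriction of weights $\mathfrak{h}_G^{\,*} \to \mathfrak{h}_M^{\,*}$ coincides with orthogonal projection of Cartans. This restriction applied componentwise is the map $\pi$ of the theorem. Using $\omega_i = \epsilon_1 + \cdots + \epsilon_i$, a direct computation gives $\omega_i|_M = \nu_i$ for $i \leq r-1$ and $\omega_r|_M = \nu_{r-1}$, which is the one-step instance of the formula; iteration produces general $s$. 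The analogous calculation handles $\normtext{SO}(2r+1)$, with the factor-of-two adjustments recorded in Corollary~\ref{MainCor}. The identity $\pi \circ \iota = \mathrm{id}$ is then an immediate check against the same corollary.

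The main content is that $\pi$ sends $\econe{n}{G}$ into $\econe{n}{M}$. By the strong functoriality of Theorem~\ref{MainThm}, this is equivalent to the invariance statement $\iota(\pi(\vec\lambda)) \in \econe{n}{G}$ for every $\vec\lambda \in \econe{n}{G}$: that is, the orthogonal projection $P := \iota \circ \pi$ of $\mathfrak{h}_{+,G}^n$ onto $\iota(\mathfrak{h}_{+,M}^n)$ preserves $\econe{n}{G}$. This invariance is the main obstacle. The natural approach is to construct, from witnesses $X_i \in \mathcal{O}_{\lambda_i}$ with $\sum_i X_i = 0$, new witnesses $Y_i \in \mathcal{O}_{P(\lambda_i)}$ with $\sum_i Y_i = 0$. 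The favorable structural input is that $\mathfrak{h}_M^{\perp}$ is exactly the Lie algebra of the central $\mathbb{G}_m$ factor of the Levi centralizer $\normtext{Sp}(2r-2) \times \mathbb{G}_m$ of the coroot $e_r \in \mathfrak{h}_G$, so the component killed by $P$ is precisely this central direction. Plausible routes include a convex-interpolation argument along the segment joining $\vec\lambda$ to $P(\vec\lambda)$, and a direct check that the Belkale--Kumar inequalities defining $\econe{n}{G}$ descend under $P$ (using self-adjointness of $P$ to rewrite each inequality $\sum_i\langle\mu_i,\lambda_i\rangle\leq 0$ as $\sum_i\langle P(\mu_i),\lambda_i\rangle\leq 0$). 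In either route the core difficulty is that the witnesses $X_i$ need not live in the Levi, so the splitting into an $M$-part and a central part must be engineered at the level of the eigencone rather than at the Lie algebra. Once invariance is established, surjectivity of $\pi$ and the section claim follow at once from $\pi \circ \iota = \mathrm{id}$.
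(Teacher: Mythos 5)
There is a genuine gap: the heart of the theorem --- that $\pi(\vec{\lambda})\in\econe{n}{\normtext{Sp}(2s)}$ for every $\vec{\lambda}\in\econe{n}{\normtext{Sp}(2r)}$ --- is precisely what you call ``the main obstacle,'' and you leave it unproved, offering only ``plausible routes.'' Your preliminaries are fine (reduction to $s=r-1$, identification of $\pi$ with restriction of weights to the sub-Cartan, $\pi\circ\iota=\mathrm{id}$), and the reformulation via Theorem \ref{MainThm} as invariance of $\econe{n}{G}$ under $\iota\circ\pi$ is correct, but it does not advance the argument. Moreover, the routes you sketch are shaky: the ``self-adjointness'' manipulation rewrites an inequality $\sum_i\langle\omega_P,w_i^{-1}\lambda_i\rangle\le 0$ as $\sum_i\langle P(w_i\omega_P),\lambda_i\rangle\le 0$, but $P(w_i\omega_P)$ is in general not of the form $w\,\omega_P$ for any Weyl group element, so the rewritten statement is not among the inequalities known to hold on $\econe{n}{G}$ and cannot be deduced from $\vec{\lambda}\in\econe{n}{G}$; the witness-interpolation idea is not developed at all.

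The missing mechanism, which is how the paper argues, runs in the opposite direction: one checks the irredundant inequalities of the \emph{small} cone directly against $\vec{\lambda}$. By Theorem \ref{EigIneqThm} it suffices to verify, for each Levi-movable product $\sigma^M_{w_1}\cdots\sigma^M_{w_n}=\normtext{[pt]}$ over $M/Q$, that $\sum_i\langle\omega_P,w_i^{-1}\pi(\lambda_i)\rangle\le 0$. Two inputs give this: (i) Theorem \ref{CohomThm} guarantees the corresponding product over $G/P$ is non-zero, so by Theorem \ref{EigIneqThmMult} every $\vec{\lambda}\in\econe{n}{G}$ satisfies $\sum_i\langle\omega_P,w_i^{-1}\lambda_i\rangle\le 0$; and (ii) each $\lambda_i-\pi(\lambda_i)$ is a multiple of $\nu_r=\tfrac{1}{2}\alpha_r$, which is fixed by every $w\in W_M^Q$ (trivial second-factor part) and pairs to zero with $\omega_P$, so $\langle\omega_P,w_i^{-1}\pi(\lambda_i)\rangle=\langle\omega_P,w_i^{-1}\lambda_i\rangle$. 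Without something playing the role of (i) --- a guarantee that the inequalities arising from $M/Q$ are valid on all of $\econe{n}{G}$ --- the claim that $\pi$ lands in $\econe{n}{\normtext{Sp}(2s)}$ does not follow from anything in your write-up; once (i) and (ii) are in place, surjectivity and the section property do follow from $\pi\circ\iota=\mathrm{id}$ as you say.
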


\subsection{Methods}

The proof of the above theorems relies upon the determination of $\econe{n}{G}$ by inequalities
parametrized by cohomology products.  The inequalities corresponding to the regular faces of $\econe{n}{G}$
are parametrized by products $\sigma_{w_1} \cdots \sigma_{w_n} = 1[pt]$ such that the product
is additionally \emph{Levi-movable}.  To prove a subgroup $M \subseteq G$ induces a sub-eigencone,
we need to relate cohomology products over homogeneous spaces of $M$ with products over
homogeneous spaces of $G$.  The relation is induced by the inclusion of Weyl groups $W_M \subseteq W_G$,
which gives a correspondence between Schubert cells of $M/Q$ and Schubert cells of $G/P$.
(See Section \ref{EigenconeSec} for more details on the inequalities of the eigencone and 
the strategy of the proof of the above theorems.)

The following theorem is our main geometric result. Together with the work in \cite{BRTHS,LEETHS}, this theorem
is sufficient to prove Theorem \ref{MainThm}.  Here $G=\text{Sp}(2r)$ and $M=\text{Sp}(2(r-1))$ is
the subgroup described in Theorem \ref{MainThm}, and $\text{IG}(k,2r)$ denotes the Grassmannian
of $k$-dimensional isotropic subspaces.

\begin{thm}\label{CohomThm}
For classes $\sigma_1^M, \ldots, \sigma_n^M \in \normtext{H}^*(\normtext{IG}(k,2(r-1)))$, if
the product $\sigma_1^M \cdots \sigma_n^M = 1\normtext{[pt]}$ is Levi-movable, then 
the corresponding product $\sigma_1 \cdots \sigma_n$ in $\normtext{H}^*(\normtext{IG}(k,2r))$ is non-zero,
Levi-movable, and equal to a multiple of the class of a point in $G/P$.
\end{thm}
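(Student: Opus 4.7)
The plan is to transfer both the transversality of the intersection and its Levi-movability from $M/Q = \normtext{IG}(k,2(r-1))$ to $G/P = \normtext{IG}(k,2r)$ via the Weyl group inclusion $W_M \hookrightarrow W_G$ from Section \ref{EigenconeSec}. Throughout, let $w_i \in W_M^Q$ denote the minimal coset representative indexing $\sigma_i^M$, and let the same letter $w_i$ also denote the corresponding element of $W_G^P$ under this inclusion.

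The first step is a numerical identity. Since $\sigma_1^M \cdots \sigma_n^M = 1[\normtext{pt}]$, we have $\sum_i \ell_M(w_i) = \dim_{\mathbb{C}}(M/Q)$. To conclude that $\sigma_1 \cdots \sigma_n$ is a multiple of the point class in $G/P$, I need
\[
\sum_{i=1}^n \ell_G(w_i) = \dim_{\mathbb{C}}(G/P),
\]
equivalently $\sum_i (\ell_G(w_i) - \ell_M(w_i)) = \dim_{\mathbb{C}}(G/P) - \dim_{\mathbb{C}}(M/Q)$. I would verify this directly from the signed-permutation descriptions of $W_G^P$ and $W_M^Q$: each extra inversion acquired by $w_i$ upon inclusion into $W_G^P$ corresponds to a positive root of $G$ not lying in $M$, and a uniform count should match the codimension difference between the two isotropic Grassmannians.

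The second and most delicate step is a tangent-space analysis. By the Levi-movability hypothesis, there exist $l_i \in L_Q^M$ such that $\bigcap_i l_i \cdot X_{w_i}^M$ is transverse at a base point $x \in M/Q$; that is, $\sum_i T_x(l_i \cdot X_{w_i}^M) = T_x(M/Q)$, with the dimensions summing correctly. Choosing compatible tori and parabolics so that $L_Q^M \subseteq L_P^G$, I would show that the same translates yield $\sum_i T_x(l_i \cdot X_{w_i}) = T_x(G/P)$ in $G/P$. Decomposing into $T$-weight spaces, the weights of $T_x(G/P)$ absent from $T_x(M/Q)$ correspond exactly to roots of $G$ that do not restrict to roots of $M$; the key claim, to be checked combinatorially from signed-permutation data indexing tangent weights of Schubert cells, is that every such ``new'' weight space is contained in $T_x(X_{w_i})$ for \emph{every} $i$. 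Granted this, the numerical identity forces the sum to be direct, establishing transversality in $G/P$.

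Once transversality at $x$ in $G/P$ is achieved via translates that already lie in $L_Q^M \subseteq L_P^G$, all three conclusions fall out simultaneously: non-vanishing is immediate, the codimension identity forces the product to be a positive integer multiple of $[\normtext{pt}]$, and Levi-movability follows because the translating elements sit in $L_P^G$. The principal obstacle is therefore the weight-space containment in the second step, and in particular pinning down how the twisted embedding $\normtext{Sp}(2r-2) \subseteq \normtext{Sp}(2r)$ arising from $\beta_{r-1} = 2\alpha_{r-1}+\alpha_r$ matches pairs of ``new'' roots of $G$ with inversions that each $w_i$ automatically acquires. I expect this to require a focused case analysis using the standard combinatorics of Schubert varieties in the symplectic Grassmannian, directly parallel to the tangent-space arguments carried out for the diagram-automorphism cases in \cite{BRTHS,LEETHS}.
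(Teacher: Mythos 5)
Your outline diverges from the paper at both of its key steps, and each divergence is a genuine gap. First, the numerical identity $\sum_i \normtext{codim}(C_{w_i}) = \dim(G/P)$ is not obtainable by a ``uniform count'' of extra inversions: by Proposition \ref{CodimProp2} the codimension jump from $M/Q$ to $G/P$ is $2(r-s)\,|I^M_{w_i}\le s|$, which depends on $w_i$, so the identity you need amounts to $\sum_i |I^M_{w_i}\le s| = k$, and this is false for general point-class products. It is exactly here that the Levi-movability hypothesis is consumed in the paper: one passes to the odd orthogonal group $H$, uses the type B/C eigencone isomorphism (Proposition \ref{BCProdDual}, which in turn rests on Ressayre's irredundancy) to see that the corresponding $H$-product is Levi-movable, and then Proposition \ref{ChiHProp} converts $\theta^M(\vec w)-\theta^H(\vec w)=0$ into $\expDim{\vec{w}}{G}=0$ (Proposition \ref{ExpDimProp}). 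The remark following the theorem stresses that Levi-movability is a necessary hypothesis, so no purely combinatorial inversion count that ignores it can establish your step 1.

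Second, the tangent-space transfer at a fixed point $x\in M/Q$ using translates in $L^M_Q$ cannot work as stated. Your ``key claim'' -- that every weight space of $T_x(G/P)$ not in $T_x(M/Q)$ lies in $T_x(l_iX_{w_i})$ for \emph{every} $i$ -- is self-defeating: it would place these $\dim(G/P)-\dim(M/Q)=2k$ directions inside $\bigcap_i T_x(l_iX_{w_i})$, whereas transversality for an expected-dimension-zero intersection forces that intersection of tangent spaces to be zero. More substantively, the paper's analysis shows that translating only by $M$ (a fortiori only by $L^M_Q$, which fixes $x$'s orbit) does not in general make the $G/P$-intersection proper: when no $C_{I_j}$ lies in $\mathcal{O}_1=M/Q$, the traces on the orbit $\mathcal{O}_2$ have excess dimension (Proposition \ref{SchubOrbDim}), and one must first shift one Schubert variety by the special flag $E_\bullet(I_1)$ built from $e_{i_1}+e_r$ -- an element of $G$, not of $M$ -- after which generic $M$-translates give a proper, nonempty intersection whose unique point lies in $\mathcal{O}_2$, outside $M/Q$. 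Non-vanishing then follows from properness via Kleiman/Fulton, and Levi-movability of the $G/P$-product is deduced from the Belkale--Kumar numerical criterion ($\theta(\vec w)=0$ together with non-vanishing), not by exhibiting Levi translates. So both halves of your plan need to be replaced: the dimension count by the B/C duality argument, and the pointwise tangent-space argument by the orbit-by-orbit properness analysis with the extra shift by an element of $G$.
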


\begin{rmk}
Both the assumption that the product is equal to $1\text{[pt]}$ and that the product is Levi-movable
are necessary, and therefore the eigencone theorems
depend on the work of Belkale and Kumar, who showed that the inequalities parametrized by
these products are sufficient to define the eigencone \cite{BK06}.  In fact, the theorem
also depends on the work of Ressayre, who showed that this reduced set of inequalities are
irredundant \cite{RSS10}.
\end{rmk}

The basic strategy of the proof of this theorem is to show that the intersection of the Schubert
cells corresponding to each $\sigma_i$ can be made proper by the action of $M$ on $G/P$.
By replacing $M$ with the maximal rank subgroup $\text{Sp}(2(r-1)) \times \text{SL}(2)$ containing
it, we obtain a subgroup that acts on a finite set of orbits, making it
possible to check properness orbit-by-orbit.
This is the strategy used by Belkale-Kumar and Braley
to prove similar cohomology results \cite{BKISO,BRTHS} (they did not need to enlarge $M$ however).  
In our case it is not quite true that even this larger $M$ can make the intersection proper.  
However by first shifting one of the cells by a particular element of $G$ we can show that
general elements of $M$ will make the intersection proper.  For more details see section \ref{ProperSec}.

\subsection{Finding sub-eigencones and a general conjecture}

Finally, we want to indicate how these results were found, and make a general conjecture.
Let $G$ be a simple, simply connected algebraic
group over $\mathbb{C}$, and suppose $M \subseteq G$ is a semisimple subgroup,
such that $M$ is the fixed subgroup of an automorphism of $G$.  Assume there
is a simple factor of $M$, say $M_1$, whose highest root coincides with the highest root of  $G$.  The philosophy
behind the above results is that there should be a close relationship between the eigencone of $G$
and the eigencone of $M_1$.

The map $\econe{n}{M_1} \rightarrow \econe{n}{G}$ is in general only piecewise-linear, and may involve 
\emph{folding} $\econe{n}{M_1}$ into $\econe{n}{G}$, identifying multiple points of $\econe{n}{M}$
with a point in $\econe{n}{G}$.
This occurs because the dominant chamber of $M_1$ will not always map into the dominant chamber
of $G$.  A subgroup $M_1 \subseteq G$ that induces this folding behavior will in general not
have the strong eigencone functoriality property.  However, along the ``folds" of this map,
where there is no identification, we observed that the the strong eigencone functoriality property
does hold.  More precisely, the folds of the map $\phi: \econe{n}{M_1} \rightarrow \econe{n}{G}$ is
the set $\{ \vec{\lambda} \in \econe{n}{M_1} \mid \phi^{-1}(\phi(\vec{\lambda})) = \{\vec{\lambda}\} \}$,
and we have the following conjecture.

\begin{conj}
Along the folds of the map $\econe{n}{M_1} \rightarrow \econe{n}{G}$, we have
that $\vec{\lambda} \in \econe{n}{M_1}$ if and only if $\vec{\lambda} \in \econe{n}{G}$.
\end{conj}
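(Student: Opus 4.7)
The plan is to reduce the conjecture to a cohomological lifting statement in the spirit of Theorem \ref{CohomThm}, and then invoke the Belkale--Kumar--Ressayre description of the eigencone. Fix $\vec\lambda$ in a fold of $\phi$. By definition of the fold, $\phi$ restricts on some neighborhood of $\vec\lambda$ to a single linear (isometric) embedding $\mathfrak{h}_{+,M_1}^n \hookrightarrow \mathfrak{h}_{+,G}^n$, so one does not have to contend with the piecewise-linear Weyl chamber folding. The forward direction is then immediate from the functoriality of the additive eigencone \cite{KLMJ09}. For the converse, by \cite{BK06,RSS10} the cone $\econe{n}{M_1}$ is cut out by an irredundant system of inequalities parametrized by tuples $(w_1^{M_1}, \ldots, w_n^{M_1})$ in $W_{M_1}^Q$ whose Schubert product equals $1[\text{pt}]$ and is Levi-movable in $H^*(M_1/Q)$. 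It thus suffices to exhibit, for each such $M_1$-inequality at $\vec\lambda$, a $G$-inequality at $\phi(\vec\lambda)$ that entails it.

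The assumption that $M_1$ shares its highest root with $G$ ensures that, after identifying Cartan subalgebras compatibly, $W_{M_1}$ embeds in $W_G$ and parabolics match: each $w^{M_1} \in W_{M_1}^Q$ corresponds to a minimal coset representative $w \in W_G^P$, where $P \subseteq G$ is the parabolic determined by the image of the simple roots defining $Q$. Via the isometric embedding the two numerical inequalities are then identified up to a positive scalar. The heart of the argument is the following generalization of Theorem \ref{CohomThm}: if the product $\sigma_{w_1^{M_1}} \cdots \sigma_{w_n^{M_1}} = 1[\text{pt}]$ is Levi-movable in $H^*(M_1/Q)$, then the corresponding product $\sigma_{w_1} \cdots \sigma_{w_n}$ in $H^*(G/P)$ is non-zero, Levi-movable, and a positive multiple of the class of a point. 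I would attempt this geometrically, in the spirit of \cite{BKISO,BRTHS} and Theorem \ref{CohomThm}, by translating the $n$ Schubert cells by generic elements of $M_1$ (possibly enlarged to an intermediate maximal rank subgroup of $G$) and showing that the resulting intersection in $G/P$ is proper and reduced.

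The principal obstacle is establishing this cohomological lifting in full generality. The argument for Theorem \ref{CohomThm} succeeds because $M = \text{Sp}(2r-2)$ can be enlarged to the maximal rank subgroup $\text{Sp}(2r-2) \times \text{SL}(2)$, which acts on $G/P$ with finitely many orbits, so that properness can be verified orbit by orbit, and because one of the cells can be shifted by a carefully chosen element of $G$. In the general conjectural setting $M_1$ is only a simple factor of the fixed subgroup of an automorphism, which for an outer (e.g.\ diagram) automorphism has strictly smaller rank than $G$; the finite-orbit reduction is no longer immediate, and no canonical shifting element presents itself. The delicate issue is therefore to construct, uniformly and for every Levi-movable $M_1$-product, the analogue of the Belkale--Kumar shifting element. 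A promising route is an inductive tower built from successive fixed subgroups, combined with a Ressayre-style analysis of dominant pairs at each stage; but controlling non-vanishing, Levi-movability, and the equality with the point class simultaneously throughout such a tower is the central combinatorial-geometric difficulty that would need to be surmounted.
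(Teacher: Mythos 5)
The statement you are trying to prove is stated in the paper as a \emph{conjecture}, and the paper does not prove it: the author explicitly proves only the special cases collected in Theorem \ref{MainThm} (via Theorem \ref{CohomThm} together with the results of Braley and Lee) and notes that the conjecture remains open, e.g.\ for fixed subgroups of inner automorphisms in types $\text{E}_6$, $\text{E}_7$, $\text{E}_8$. Your proposal likewise does not prove it. The reduction you describe --- pass to the Belkale--Kumar--Ressayre irredundant system of Levi-movable inequalities for $\econe{n}{M_1}$ and lift each one to a $G$-inequality via a cohomological statement ``Levi-movable product equal to $1[\text{pt}]$ over $M_1/Q$ implies non-zero (Levi-movable, multiple of a point) product over $G/P$'' --- is exactly the mechanism of Corollary \ref{SubconeCor} and Theorem \ref{CohomThm}, i.e.\ the paper's own strategy for the cases it can handle. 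The entire content of the conjecture is whether that lifting statement holds along the folds in general, and you leave it unproven: the finite-orbit enlargement of $M_1$ and the special shifting element of $G$, which are the two ingredients that make the proof of Theorem \ref{CohomThm} work for $\text{Sp}(2r-2)\subseteq\text{Sp}(2r)$, are not constructed in your setting, and you acknowledge this is ``the central difficulty that would need to be surmounted.'' A proof attempt that terminates at the statement of the main difficulty is a restatement of the problem, not a solution.

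Two further points deserve care even as parts of a programme. First, your claim that ``by definition of the fold, $\phi$ restricts on some neighborhood of $\vec\lambda$ to a single linear (isometric) embedding'' does not follow from the definition $\{\vec\lambda \mid \phi^{-1}(\phi(\vec\lambda))=\{\vec\lambda\}\}$: a piecewise-linear map can be injective at and near a crease without being linear on any neighborhood, and fold points can lie on walls where the dominant chamber of $M_1$ leaves the dominant chamber of $G$, which is precisely where the identification of the two inequality systems (your ``identified up to a positive scalar'') needs an argument rather than an assertion. Second, even the forward direction needs the isometric-embedding hypothesis on $\mathfrak{h}_{+,M_1}\rightarrow\mathfrak{h}_{+,G}$ (as in the paper's definition of inducing a sub-eigencone); functoriality of the eigencone alone gives membership of $\phi(\vec\lambda)$ in $\econe{n}{G}$, which coincides with the conjecture's assertion about $\vec\lambda$ only after that identification is justified at the point in question.
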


The results in Theorem \ref{MainThm} arose from following this script in types B, C, D, $\text{G}_2$ and $\text{F}_4$
for maximal rank semisimple subgroups of $G$; these subgroups are all centralizers of torus elements.  Folding
does happen in types B, D and $\text{F}_4$, and the sub-eigencones listed for these groups are a combination
of the result in type C and the results of Braley and Lee.
In type A the required subgroups do not exist, since in this case any proper centralizer of a torus element is not semisimple.
We did not attempt to study types $\text{E}_6$, $\text{E}_7$, $\text{E}_8$, and the conjecture is open for 
fixed subgroups of inner automorphisms of these groups.

\subsection{Outline}

The paper is organized as follows.
\begin{itemize}
\item Section \ref{EigenconeSec}: we review the definition of the additive eigencone, and 
the determination of its facets by inequalities parametrized by cohomology products.  We also
discuss the isomorphism between the eigencones in type B and C, which is important in the next
section.
\item Section \ref{PrelimSection}: we begin the proof of Theorem \ref{CohomThm} by showing
the intersection in $\text{IG}(k,2r)$ has expected dimension zero.  
\item Section \ref{ProperSec}: we finish the proof of Theorem \ref{CohomThm} as outlined in
the introduction.  We then use this theorem to finish the proof of Theorem \ref{MainThm}.
\end{itemize}

\subsection{Notation}
Let $G$ be a simple, connected, complex algebraic group of rank $r$.  Fix a Borel subgroup $B$ and a maximal
torus $T \subseteq B$.  Let $W=W_G$ be the Weyl group of $G$.  We denote the Lie algebras of Lie groups
using fraktur script: for example, the Lie algebra of $G$ is written $\mathfrak{g}$.  Let $\mathfrak{h}$
be the Cartan algebra corresponding to the choice of torus, and let $R \subseteq \mathfrak{h}^*$
be the set of roots of $\mathfrak{g}$.  Let $R^+$ be the set of positive roots with respect to $B$,
and let $\alpha_1, \ldots, \alpha_r$ be the simple roots, ordered as in Bourbaki \cite{BLIE46}.
Let $x_1, \ldots, x_r \in \mathfrak{h}$ be the dual basis of $\alpha_1, \ldots, \alpha_r \in \mathfrak{h}^*$.
We denote the Killing form using angle brackets $\langle, \rangle$, which is normalized so that
$\langle \theta, \theta \rangle = 2$, where $\theta$ is the highest root of $G$.  The fundamental
weights $\omega_1, \ldots, \omega_r \in \mathfrak{h}^*$ are defined so that 
$\frac{2\langle \omega_i, \alpha_j \rangle}{\langle \alpha_j, \alpha_j \rangle} = \delta_{ij}$,
and we denote the dominant Weyl chamber $\mathfrak{h}_+ \subseteq \mathfrak{h}$, and its dual space
of dominant weights $\Lambda_+ \subseteq \mathfrak{h}^*$.

For any root $\beta$ we denote the corresponding reflection $s_\beta \in W$.  For a parabolic subgroup $P \supseteq B$,
let $W_P$ be the corresponding subgroup of $W$, and let $W^P$ be the set of minimal length coset representatives
of $W/W_P$.  Each $w \in W^P$ corresponds to a unique Schubert cell $C_w \subseteq G/P$.  Let $X_w$
be the closed Schubert cell, and $\sigma_w \in \text{H}^*(G/P)$ be the Poincar\'{e} dual of the homology
class of $X_w$.

\subsection{Acknowledgements}
I gladly thank Prakash Belkale and Shrawan Kumar for helpful conversations and encouragement during the preparation of
this article.

\section{The additive eigencone in types B and C}\label{EigenconeSec}

In this section we review the definition and geometry of the additive eigencone of $G$,
describe the isomorphism between the eigencones in types B and C, and explain the implications
this has for the cohomology products parameterizing the two eigencones.

\subsection{The additive eigencone of $G$}

We want to define more precisely the additive eigencone of $G$, and how its faces are described by cohomology products.
Let us first consider the algebra $\mathfrak{k}$ of Hermitian matrices.  Every Hermitian
matrix $A$ has a unique set of real eigenvalues, which we denote by $\epsilon(A)$.  Then the group $K$ of unitary 
matrices acts on $\mathfrak{k}$ by conjugation, and $\epsilon$ is constant on each conjugacy class.
A set of eigenvalues can be identified with a point in the dominant Weyl chamber of the Cartan algebra $\mathfrak{h}_+$ of $\mathfrak{k}$,
and therefore we get a surjective map $\epsilon: \mathfrak{k}/K \rightarrow \mathfrak{h}_+$.  The additive eigenvalue
problem is as follows: for which sets of eigenvalues $\mu_1, \ldots, \mu_n \in \mathfrak{h}_+$
 do there exist Hermitian matrices $A_1, \ldots, A_n$ such that $\epsilon(A_i) = \mu_i$ and 
$A_1 + \cdots + A_n = 0$?

We can generalize this to any Lie type as follows.  Let $B \subseteq G$ be a Borel subgroup
containing a maximal torus $T$, and let $K \subseteq G$ be a maximal compact subgroup such that $i \mathfrak{h}_{\mathbb{R}}$
is the Lie algebra of a maximal torus of $K$, where $\mathfrak{h}_{\mathbb{R}}$ is a real form of 
the Lie algebra $\mathfrak{h}$ of $T$.  Then as for Hermitian matrices one can define an eigenvalue map
$\epsilon: \mathfrak{k}/K \rightarrow \mathfrak{h}_+$, where $K$ acts on its Lie algebra $\mathfrak{k}$ 
by the adjoint action.  The eigenvalue problem is then: for which $\mu_1, \ldots, \mu_n \in \mathfrak{h}_+$
 do there exist $A_1, \ldots, A_n \in \mathfrak{k}$ such that $\epsilon(A_i) = \mu_i$ and $A_1 + \cdots + A_n = 0$?
Fixing $n$, the set of tuples $(\mu_1, \ldots, \mu_n)$ satisfying
this statement forms a full-dimensional convex polyhedral subcone $\econe{n}{G} \subseteq \mathfrak{h}_+^n$ called the
additive eigencone.   

Now recall that for any dominant integral weight $\lambda \in \Lambda_+$ of $G$, there is a unique associated irreducible
finite dimensional representation $V_\lambda$.  Consider the following problem: for a tuple of such
weights $(\lambda_1, \ldots, \lambda_n)$, when does the tensor representation $V_{\lambda_1} \otimes \cdots \otimes V_{\lambda_n}$
have a non-trivial invariant subspace $\mathbb{A}_{\vec{\lambda}} = (V_{\lambda_1} \otimes \cdots \otimes V_{\lambda_n})^G$?  Remarkably,
there is a very close connection between this problem and the additive eigenvalue problem.  For any dominant integral weight
$\lambda$ and integer $N>0$, we can scale $\lambda$ by $N$ to get another dominant integral weight $N\cdot \lambda$.
Then the \emph{saturated tensor problem} is: for which tuple of weights $(\lambda_1, \ldots, \lambda_n)$
does there exist a positive integer $N$ such that $\mathbb{A}_{N\vec{\lambda}} \neq \{0\}$?
This problem is equivalent to the additive eigenvalue problem:
letting $\Gamma_n(G)$ be the semigroup of all tuples of weights satisfying the saturated tensor problem, we have that
$(\lambda_1, \ldots, \lambda_n) \in \Gamma_n(G)$ if and only if $(\kappa(\lambda_1), \ldots, \kappa(\lambda_n)) \in \econe{n}{G}$,
where $\kappa: \mathfrak{h}^* \xrightarrow{\sim} \mathfrak{h}$ is the isomorphism induced by
the Killing form.

\subsection{The facets of the additive eigencone}

Since the additive eigencone is a polyhedral cone, it is defined by a unique set of irredundant linear inequalities.
The inequalities are parametrized by products in the cohomology ring of the flag varieties $G/P$, where $P$ is 
a maximal parabolic.  In type A these are the complex Grassmannians.  Let us begin by reviewing the general type
combinatorics of the cohomology of $G/P$.

For any flag variety $G/P$ there is a canonical cell decomposition into \emph{Schubert cells}.  The Schubert cells
are parametrized by cosets in $W/W_P$, where $W$ is the Weyl group of $G$, and $W_P$ is the Weyl group
of $P$.  These cosets each have a unique (minimal length) representative, the set of which
is denoted $W^P$. We denote by $C_w$ the Schubert cell corresponding to $w \in W^P$, and by $\sigma_w \in \text{H}^*(G/P)$
the Poincar\'{e} dual of the homology class of $C_w$.  It is well known that the cohomology
ring $\text{H}^*(G/P)$ is generated by the Schubert classes $\sigma_w$, and therefore the cohomology ring
is determined by the set of positive numbers $c_{u,v}^w$ such that
\[
\sigma_{u} \cdot \sigma_{v} = \sum_{w \in W^P } c_{u,v}^{w} \cdot \sigma_w^*.
\]

Inequalities that determine the additive eigencone are parametrized by cohomology products equal to a multiple of
a point.  The following theorem was proven in type A by Klyachko, and in general type by Berenstein and 
Sjamaar.
\begin{thm}\label{EigIneqThmMult}\normtext{\cite{KLY98,BS00}}
A tuple of dominant weights $\vec{\lambda}$ lies in the eigencone $\econe{n}{G}$ if and only if
for every non-zero cohomology product $\sigma_{w_1} \cdots \sigma_{w_n} = m \normtext{[pt]}$, the 
following inequality is satisfied:
\[
\sum_{i=1}^n \langle \omega_P, w_i^{-1} \lambda_i \rangle \leq 0.
\]
\end{thm}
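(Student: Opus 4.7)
The plan is to derive the theorem from geometric invariant theory, following Klyachko in type A and the extension to general type by Berenstein and Sjamaar. First I would reinterpret the statement via the saturated tensor identification recalled above: $\vec{\lambda} \in \econe{n}{G}$ is equivalent, after clearing denominators, to some positive multiple of the corresponding dominant weight tuple lying in $\Gamma_n(G)$, which by the Borel--Weil theorem means the $G$-linearized line bundle $\mathcal{L}_{\vec{\lambda}}$ on $X := (G/B)^n$ admits a nonzero $G$-invariant global section after sufficient scaling. By standard GIT this is in turn equivalent to the semistable locus $X^{\mathrm{ss}}(\mathcal{L}_{\vec{\lambda}})$ being nonempty.

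Next I would apply the Hilbert--Mumford numerical criterion. A point $\vec{x} = (g_1 B, \ldots, g_n B)$ is semistable iff for every one-parameter subgroup $\delta$ of $G$, the Mumford weight $\mu^{\mathcal{L}_{\vec{\lambda}}}(\vec{x}, \delta) \leq 0$. Any $1$-PS is $G$-conjugate to one generated by an element $h \in \mathfrak{h}_+$, and a direct calculation using the Bruhat decomposition shows that if each $g_i B$ lies in the Schubert cell indexed by $w_i$ relative to the parabolic fixing $h$, then the contribution of the $i$-th factor to $\mu$ is, up to a sign convention absorbed by the statement, $\langle w_i^{-1} \lambda_i, h \rangle$. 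Since any $h \in \mathfrak{h}_+$ is a non-negative combination of the fundamental coweights $\omega_P$ indexed by maximal parabolics $P \supseteq B$, it suffices to test HM on these $\omega_P$; moreover, for such a choice of $h$, the Mumford contribution depends only on the image of $w_i$ in $W^P$, so we may take $w_i \in W^P$.

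Finally I would convert the resulting numerical conditions into the cohomological criterion via Kleiman transversality: for $w_i \in W^P$, generic $G$-translates $g_i C_{w_i} \subseteq G/P$ intersect in a nonempty zero-dimensional subset precisely when the product $\sigma_{w_1} \cdots \sigma_{w_n}$ in $\mathrm{H}^*(G/P)$ is a nonzero multiple of the point class. Thus every such nonzero product produces a necessary semistability inequality $\sum_i \langle \omega_P, w_i^{-1} \lambda_i \rangle \leq 0$ (the ``only if'' direction), and conversely any destabilizing $\omega_P$-type $1$-PS at a candidate $\vec{x}$ forces a violating configuration of Schubert cells, hence a violating cohomology product.

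The main obstacle I expect is the converse direction: showing that once all listed inequalities hold, a single $\vec{x}$ actually exists which is simultaneously semistable against every $1$-PS. Passing from ``for each $\delta$ some $\vec{x}$ satisfies HM'' to ``some $\vec{x}$ satisfies HM for all $\delta$'' requires both that the semistable locus of a projective GIT problem is either empty or Zariski open, and the Kirwan--Hesselink stratification of the unstable locus, which exhibits every instability as being witnessed by an adapted $1$-PS conjugate to one coming from a maximal parabolic. Combined with the transversality argument above, this reduces the full HM criterion to the finite list of cohomological inequalities in the theorem.
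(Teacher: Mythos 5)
The paper does not prove this theorem at all --- it is quoted from Klyachko and Berenstein--Sjamaar --- and your GIT outline is precisely the approach of those cited sources: identify the eigencone with the saturated tensor semigroup via the Killing form, use Borel--Weil to recast membership as nonemptiness of the semistable locus of $\mathcal{L}_{\vec{\lambda}}$ on $(G/B)^n$, compute Mumford weights through relative position, and pass to Schubert calculus by Kleiman transversality. As an outline this is sound, but two steps are thinner than your write-up suggests.

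First, ``it suffices to test HM on the fundamental coweights'' is not a formal consequence of writing a dominant $h$ as a non-negative combination of the $x_j$: for fixed $x$ the function $h \mapsto \mu^{\mathcal{L}}(x,\delta_h)$ is only piecewise linear, since the limit point, hence the relative position $w_i$, jumps as $h$ moves between faces of $\mathfrak{h}_+$. The reduction is true, but it needs the description of $\mu$ as a maximum over the fixed finite set of $T$-weights occurring at the point, each pairing linearly with $h$, so that nonpositivity at the rays (for every conjugate) forces nonpositivity on the whole chamber. Second, and more seriously, in the converse direction your argument only produces, at a generic unstable point, a destabilizing tuple $(P, v_1, \ldots, v_n)$ for which generic translates of the cells $C_{v_i}$ meet; this need not satisfy $\sum_i \mathrm{codim}\, C_{v_i} = \dim G/P$, whereas the theorem lists only inequalities attached to products equal to $m[\mathrm{pt}]$. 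One must either deepen the $v_i$ to reach expected dimension zero while preserving both the nonvanishing of the product and the violated inequality --- this uses the monotonicity of $w \mapsto \langle \omega_P, w^{-1}\lambda\rangle$ along the Bruhat order for dominant $\lambda$ --- or extract the codimension statement from the dimension count in the Hesselink--Kirwan stratification. Relatedly, Kempf's adapted one-parameter subgroup is in general not conjugate to a fundamental coweight direction, so ``every instability is witnessed by an adapted 1-PS conjugate to one coming from a maximal parabolic'' is inaccurate as stated: the reduction to maximal parabolics and the use of optimal destabilizers are separate ingredients and should not be conflated.
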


While these inequalities indeed determine the multiplicative polytope, they are not irredundant.  
The facets (codimension-one faces) intersecting the interior of the dominant chamber correspond in 
general to a subset of the of the above inequalities.  Kapovich, Leeb, and Millson showed (building
on the work of Klyachko, Berenstein-Sjamaar, and additionally Belkale \cite{BEL01}) that the
list of inequalities can be reduced to those associated to products multiplying to a single point.
However in general this list of inequalities is still not irredundant.
The solution is to restrict to \emph{Levi-movable products} \cite{BK06}.  

A Levi-movable intersection is defined as follows.  Let $\Lambda_w = w^{-1} C_w$.  Then
if $\sigma_{w_1} \cdots \sigma_{w_n} = m \text{[pt]}$, there exists, by Kleiman's transversality theorem, 
generic $p_1, \ldots p_n \in P$ such that $\bigcap_i p_i \Lambda_i$ is transverse at the identity $\overline{e} \in G/P$.
The intersection is Levi-movable if we can find $l_1, \ldots, l_n$ in the Levi subgroup $L$ of $P$ such that
$\bigcap_i l_i \Lambda_i$ is transverse at $\overline{e}$.  We say that a cohomology product
is Levi-movable if the corresponding intersection of Schubert varieties is Levi-movable. 

Belkale and Kumar showed that Levi-movability is an algebraic condition and can be expressed
completely in terms of weights of $G$.  For any $w \in W^P$, define $\chi_w$ as
\[
\chi_w = \sum_{\beta \in (R^+ \setminus R^+_L) \cap w^{-1} R^+} \beta.
\]
Alternatively, $\chi_w$ can be shown (see \cite[1.3.22.3]{KSKAC}) to be equal to $\rho - 2\rho^L + w^{-1} \rho$, where $\rho$ and $\rho^L$ are one-half
the sums of the positive roots of $G$ and $L$, respectively.  Also, let $x_1, \ldots, x_r \in \mathfrak{h}$ be the
dual basis of $\alpha_1, \ldots, \alpha_r$.  Then Belkale and Kumar proved the following theorem.
\begin{thm}\normtext{\cite{BK06}}
Suppose that $\sigma_{w_1} \cdots \sigma_{w_n} = m\normtext{[pt]}$ and is non-zero.  Then the following inequality holds
\[
(\chi_1 -  \sum_{i=1}^n \chi_{w_i})(x_P) \geq 0
\]
and the product is Levi-movable if and only if this inequality is satisfied with equality.
\end{thm}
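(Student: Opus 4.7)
The plan is to translate the Levi-movability of the intersection into a determinant computation at the basepoint $\overline{e}$, and then track the weight of that determinant under the $\mathbb{C}^*$-action generated by $x_P$.

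First I would identify the tangent spaces as sums of root spaces: $V := T_{\overline{e}}(G/P) = \bigoplus_{\beta \in R^+ \setminus R^+_L} \mathfrak{g}_{-\beta}$, and $V_w := T_{\overline{e}}(\Lambda_w) = \bigoplus_{\beta \in R^+ \setminus R^+_L,\; w\beta \in R^-} \mathfrak{g}_{-\beta}$, both of which are graded by the adjoint action of $x_P$. A direct bookkeeping with the paper's formula $\chi_w = \rho - 2\rho^L + w^{-1}\rho$ then shows that the quotient $V/V_w$ has $x_P$-character $-\chi_w(x_P)$. Since $\sigma_{w_1}\cdots\sigma_{w_n} = m[\text{pt}]$ with $m \neq 0$, the codimensions sum to $\dim G/P$, so the diagonal map
\[
\phi_{\vec{p}} : V \longrightarrow \bigoplus_{i=1}^n V \big/ \mathrm{Ad}(p_i) V_{w_i}
\]
is a map between vector spaces of the same dimension, and transversality of $\bigcap_i p_i \Lambda_{w_i}$ at $\overline{e}$ is equivalent to the non-vanishing of the determinant $D(\vec{p}) := \det \phi_{\vec{p}}$.

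Next I would regard $D$ as a regular function on $P^n$ (after trivializing via root-space complements) and compute its weight under the simultaneous conjugation action $\vec{p} \mapsto (e^{sx_P} p_i e^{-sx_P})$. Using the identity $\phi_{z\vec{p}z^{-1}} = \mathrm{Ad}(z) \circ \phi_{\vec{p}} \circ \mathrm{Ad}(z^{-1})$ together with the $x_P$-characters of $\det V$ and $\det(V/V_{w_i})$ computed above, the weight of $D$ comes out to $(\chi_1 - \sum_i \chi_{w_i})(x_P)$ up to a sign. Kleiman's transversality theorem, applied under the hypothesis $\sigma_{w_1}\cdots\sigma_{w_n}\neq 0$, guarantees that $D$ is not identically zero on $P^n$. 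Because the conjugation action has strictly positive $x_P$-weights on the unipotent directions of $P^n$ and weight zero on the Levi directions, any non-zero polynomial function on $P^n$ has its weight in the non-negative cone, giving the asserted inequality $(\chi_1 - \sum_i \chi_{w_i})(x_P) \geq 0$.

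Finally, for the Levi-movability equivalence: restricted to $L^n$ the conjugation by $e^{sx_P}$ is trivial, so $D|_{L^n}$ necessarily has weight zero. If the total weight of $D$ on $P^n$ is strictly positive, then $D|_{L^n}\equiv 0$ and no Levi-movable configuration can exist. Conversely, when $(\chi_1 - \sum_i \chi_{w_i})(x_P) = 0$, one uses the $x_P$-generated $\mathbb{C}^*$-flow to retract a Kleiman-generic transverse tuple $\vec{p} \in P^n$ to a tuple in $L^n$; the vanishing of the weight ensures that $D$ is preserved (up to a non-zero scalar) along the flow, so the limiting tuple still satisfies $D \neq 0$ and hence realizes Levi-movability. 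I expect the converse direction to be the main obstacle: one must show that the limit of the $\mathbb{C}^*$-retraction genuinely lies in $L^n$ with $D \neq 0$ rather than degenerating on the boundary, which requires a properness/closedness argument and is precisely where the zero-weight hypothesis is essential (a GIT semistability argument is the natural framework).
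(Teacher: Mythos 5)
This theorem is not proved in the paper at all: it is quoted verbatim from Belkale--Kumar \cite{BK06}, so there is no internal proof to compare against. What you have written is essentially a reconstruction of the original Belkale--Kumar argument, and it is correct in outline: the identification $V/V_{w}\cong\bigoplus_{\beta\in(R^{+}\setminus R^{+}_{L})\cap w^{-1}R^{+}}\mathfrak{g}_{-\beta}$ does give $x_P$-character $-\chi_{w}(x_P)$, the determinant $D$ of the diagonal map is a $\delta(t)=e^{t x_P}$-eigenfunction of weight $(\chi_1-\sum_i\chi_{w_i})(x_P)$ under simultaneous conjugation, regularity of $D$ at the $t\to 0$ limit forces the weight to be nonnegative, and triviality of the conjugation on $L^{n}$ gives the equality criterion. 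Two remarks. First, the input ``$D\not\equiv 0$ on $P^{n}$'' is not a direct application of Kleiman's theorem, which produces generic translates by $G$, not by $P$, and not transversality at the specific point $\overline{e}$; one needs Belkale's shift argument showing that when the product is nonzero the intersection can be made transverse at $\overline{e}$ using translates from $P$ alone. The paper's definition of Levi-movability silently packages this, and in \cite{BK06} it is a separate proposition, so you should either cite it or prove it. Second, the converse direction is much easier than you anticipate: no GIT or properness argument is needed. Writing $p_i=l_iu_i$ with $l_i\in L$ and $u_i\in U_P$, one has $\delta(t)p_i\delta(t)^{-1}=l_i\,\delta(t)u_i\delta(t)^{-1}\to l_i$ as $t\to 0$, because every $x_P$-weight on $\mathrm{Lie}(U_P)$ is strictly positive; thus the limit automatically lies in $L^{n}$, and the zero-weight hypothesis is used only to conclude $D(\vec{l\,})=\lim_{t\to 0}t^{0}D(\vec{p\,})=D(\vec{p\,})\neq 0$. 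With these two points filled in, your argument is a complete and faithful version of the cited proof.
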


Belkale and Kumar  proved that the set of inequalities corresponding to Levi-movable products are 
sufficient to determine the multiplicative polytope \cite{BK06}.  Ressayre then proved that these inequalities are irredundant in 
\cite{RSS10}.  These results are summarized in the following theorem.

\begin{thm}\label{EigIneqThm}\normtext{\cite{BK06,RSS10}}
A tuple of dominant weights $\vec{\lambda}$ lies in the eigencone $\econe{n}{G}$ if and only if
for every Levi-movable cohomology product $\sigma_{w_1} \cdots \sigma_{w_n} = \normtext{[pt]}$, the 
following inequality is satisfied:
\[
\sum_{i=1}^n \langle \omega_P, w_i^{-1} \lambda_i \rangle \leq 0.
\]
These inequalities are irredundant.
\end{thm}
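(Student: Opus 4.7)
The theorem has three parts: necessity and sufficiency of the Levi-movable inequalities, plus their irredundance. Necessity is immediate from Theorem \ref{EigIneqThmMult}, since a Levi-movable product equal to $[\text{pt}]$ is in particular a nonzero product equal to a positive multiple of $[\text{pt}]$, so the corresponding inequality already appears in the Klyachko--Berenstein--Sjamaar list. The substantive content is sufficiency (due to Belkale--Kumar) and irredundance (due to Ressayre).

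For sufficiency, the plan is to translate the problem into geometric invariant theory on $X = (G/B)^n$. A suitably rescaled tuple $\vec{\lambda}$ lies in $\econe{n}{G}$ if and only if the $G$-linearized line bundle $\mathcal{L}_{\vec{\lambda}}$ has a nonzero $G$-invariant section, equivalently the semistable locus $X^{ss}$ is nonempty. By the Hilbert--Mumford numerical criterion, this is characterized by a family of inequalities indexed by one-parameter subgroups of $G$ and $T$-fixed points of $X$; choosing $x_P$ for the one-parameter subgroup and a tuple $(w_1,\ldots,w_n)$ of Weyl-group elements for the fixed point recovers exactly Theorem \ref{EigIneqThmMult}. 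To cut this list down to the Levi-movable subset, I would introduce the Belkale--Kumar deformed cup product $\odot_0$ on $\mathrm{H}^*(G/P)$, defined by retaining only those structure constants $c_{u,v}^w$ that come from triples saturating the $\chi$-inequality, i.e.\ the Levi-movable triples. The crucial step is verifying that $\odot_0$ is still an associative graded ring; granted this, an inequality coming from a non-Levi-movable product can be shown to be implied by inequalities from Levi-movable products on possibly smaller flag varieties, so non-Levi-movable inequalities are redundant.

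For irredundance I would follow Ressayre and associate to each Levi-movable product $\sigma_{w_1}\cdots\sigma_{w_n}=[\text{pt}]$ a \emph{well-covering pair} $(C,\pi)$: here $\pi$ is the one-parameter subgroup generated by $x_P$, and $C$ is the irreducible component of the fixed locus $X^\pi$ cut out by $(w_1,\ldots,w_n)$. The well-covering condition is that the natural morphism $G\times_P C\to X$ is birational onto an irreducible component of the $\pi$-unstable locus; one shows Levi-movability is equivalent to this condition. A transversality argument at a general point of $C$ then produces a tuple $\vec{\lambda}$ lying on the hyperplane determined by $(w_1,\ldots,w_n)$ but not on any other supporting hyperplane of the eigencone, which certifies that distinct Levi-movable products give distinct facets.

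The main obstacle, and the technical heart of the whole approach, is associativity of $\odot_0$. Unlike a generic truncation of the cup product, $\odot_0$ has to survive a triple-intersection transversality check at the identity coset of $G/P$: one must verify that three Schubert varieties can simultaneously be moved into transverse position by the Levi factor $L\subseteq P$, not merely by all of $P$. This rests on a careful tangent-space decomposition controlled by the identity $\chi_w = \rho - 2\rho^L + w^{-1}\rho$, which precisely measures the discrepancy between $P$-movability and $L$-movability of a Schubert variety at $\overline{e}$. Once associativity of $\odot_0$ is in place, both the Belkale--Kumar reduction to Levi-movable products and Ressayre's irredundance argument follow from relatively standard GIT and transversality techniques.
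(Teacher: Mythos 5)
The paper does not prove this statement at all: it is imported verbatim from the literature, with sufficiency attributed to Belkale--Kumar \cite{BK06} and irredundance to Ressayre \cite{RSS10}, and the surrounding text (and the remark after Theorem \ref{CohomThm}) makes clear these results are used as black boxes. Your outline is therefore not competing with an internal argument; what it does is reconstruct, at the level of a sketch, the strategy of the cited works, and it does so essentially correctly: necessity from Theorem \ref{EigIneqThmMult}, the GIT/Hilbert--Mumford description of $\econe{n}{G}$ via semistability on $(G/B)^n$, the deformed product $\odot_0$ with its associativity as the technical core of the reduction to Levi-movable products, and Ressayre's well-covering pairs for irredundance.

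One calibration is worth making if you intend to flesh this out. In \cite{BK06} the sufficiency of the Levi-movable inequalities is not a formal consequence of associativity of $\odot_0$; the actual argument is a GIT one, using Kempf's maximally destabilizing one-parameter subgroups and an induction through Levi subgroups (together with the degeneration realizing Levi-movability as a limit of the generic intersection), with the identity $\chi_w=\rho-2\rho^L+w^{-1}\rho$ supplying the numerical criterion quoted in the paper. Likewise, for irredundance you need the products with coefficient exactly one (the multiplicity matters for the well-covering condition), which your description of the pair $(C,\pi)$ implicitly uses but should be stated. Neither point is a gap in the sense of invalidating the route --- it is the standard route --- but as written the proposal understates how much of the load is carried by the GIT reduction rather than by ring-theoretic properties of $\odot_0$.
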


As a corollary, these theorems provide a method for proving a subgroup $M \subseteq G$ induces a sub-eigencone.
\begin{cor}\label{SubconeCor}
Suppose that a subgroup $M \subseteq G$ induces an isometric embedding $\econe{n}{M} \rightarrow \econe{n}{G}.$
Then $M$ induces a sub-eigencone if the following condition is satisfied: for every maximal parabolic $P \subseteq G$,
maximal parabolic $Q = M \cap P$, and
every Levi-movable product $\sigma_{w_1}^M \cdots \sigma_{w_n}^M = \normtext{[pt]}$ over $M/Q$, the corresponding product 
$\sigma_{w_1} \cdots \sigma_{w_n}$ over $G/P$ is non-zero and equal to $m\normtext{[pt]}$ for some $m > 0$.
\end{cor}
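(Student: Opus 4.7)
The plan is to exploit Theorem \ref{EigIneqThm}: the eigencone $\econe{n}{M}$ is cut out by a family of inequalities parametrized by Levi-movable products on the various $M/Q$ (for $Q \subseteq M$ a maximal parabolic), so to prove strong functoriality it suffices, given the isometric embedding, to show that any $\vec{\lambda}$ with $\phi(\vec{\lambda}) \in \econe{n}{G}$ satisfies each such Levi-movable inequality. The forward direction $\vec{\lambda} \in \econe{n}{M} \Rightarrow \phi(\vec{\lambda}) \in \econe{n}{G}$ is just ordinary functoriality of the eigencone and uses only that $\phi$ is induced by a Lie group map respecting the maximal compact subgroups, which is built into the isometric embedding hypothesis.

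For the converse, fix a maximal parabolic $Q \subseteq M$ and a Levi-movable product $\sigma_{w_1}^M \cdots \sigma_{w_n}^M = [\text{pt}]$ on $M/Q$. First I would produce a maximal parabolic $P \subseteq G$ with $Q = M \cap P$, by taking $P$ to correspond to a simple root of $G$ not orthogonal to the simple root of $M$ cutting out $Q$; the isometric embedding ensures that such a root exists. The hypothesis of the corollary then supplies $\sigma_{w_1} \cdots \sigma_{w_n} = m[\text{pt}]$ with $m > 0$ in $\text{H}^*(G/P)$, and Theorem \ref{EigIneqThmMult} delivers the $G$-inequality
\[
\sum_{i=1}^n \langle \omega_P, w_i^{-1} \phi(\lambda_i) \rangle \leq 0.
\]

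The key step is to translate this into the desired $M$-inequality $\sum_i \langle \omega_Q, w_i^{-1} \lambda_i \rangle_{\mathfrak{h}_M} \leq 0$ (up to a positive scalar). Since each $w_i$ lies in $W_M \subseteq W_G$, the element $w_i^{-1} \phi(\lambda_i)$ belongs to $\mathfrak{h}_M^*$, and the isometric embedding hypothesis lets us replace $\omega_P$ with its orthogonal projection onto $\mathfrak{h}_M^*$ without changing the pairing. That projection is perpendicular to every simple root of the Levi of $Q$, because those roots lie inside the Levi of $P$, to which $\omega_P$ is orthogonal; by maximality of $Q$ the projection must therefore be a scalar multiple of $\omega_Q$, and the choice of $P$ above forces the scalar to be strictly positive. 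The main obstacle is making this projection identification precise and pinning down the compatibility of Schubert classes in $\text{H}^*(M/Q)$ and $\text{H}^*(G/P)$ at the level of minimal length coset representatives $W_M^Q \hookrightarrow W_G^P$, but both are routine consequences of the isometric embedding combined with standard root-system arithmetic for the pair $M \subseteq G$.
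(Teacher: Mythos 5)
Your overall route is the paper's: reduce to the Levi-movable inequalities for $M$ via Theorem \ref{EigIneqThm}, use the corollary's hypothesis together with Theorem \ref{EigIneqThmMult} to obtain the $G$-inequality, and identify it with the $M$-inequality using the isometric embedding. Your orthogonal-projection argument (replace $\omega_P$ by its projection to $\mathfrak{h}_M^*$, observe it is orthogonal to the roots of the Levi of $Q$, hence proportional to $\omega_Q$) is a correct fleshing-out of the paper's one-line claim that the two inequalities coincide, and the positivity of the proportionality constant is indeed the point that needs checking.

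The genuine problem is your construction of $P$ from $Q$. Taking $P$ to correspond to an arbitrary simple root of $G$ not orthogonal to the simple root $\beta$ of $M$ cutting out $Q$ does not guarantee $M \cap P = Q$. Concretely, for $M = \mathrm{Sp}(2r-2)\times\mathrm{SL}(2) \subseteq G = \mathrm{Sp}(2r)$ and $Q$ the maximal parabolic of $M$ omitting $\beta_{r-1} = 2\alpha_{r-1}+\alpha_r = 2\epsilon_{r-1}$, the simple root $\alpha_{r-2}$ is not orthogonal to $\beta_{r-1}$, yet $M \cap P_{\alpha_{r-2}}$ is the maximal parabolic of $M$ omitting $\beta_{r-2} = \alpha_{r-2}$, not $Q$ (the correct choice here is $\alpha_{r-1}$). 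With the wrong $P$ the argument breaks at exactly this point: the hypothesis says nothing about the pair $(P, Q)$, the elements of $W_M^Q$ need not lie in $W^P$ so the ``corresponding product'' over $G/P$ is not the intended one, and the projection of $\omega_P$ is proportional to $\omega_{M\cap P}$ rather than $\omega_Q$, yielding the wrong inequality. No construction is actually needed: as in the paper's proof, one uses for each maximal parabolic $Q$ of $M$ a maximal parabolic $P$ of $G$ with $M\cap P = Q$ as furnished by the setup (this coverage is implicit in the statement). Relatedly, the strict positivity of the scalar $c$ with $\mathrm{pr}_{\mathfrak{h}_M^*}(\omega_P) = c\,\omega_Q$ should not be attributed to your choice of $P$; it follows from $Q = M\cap P$ being a proper parabolic of $M$: some positive root $\gamma$ of $M$ has positive $\alpha_P$-coefficient and positive $\beta$-coefficient, so $c\langle \omega_Q, \gamma\rangle = \langle \omega_P, \gamma\rangle > 0$ while $\langle \omega_Q, \gamma\rangle > 0$ (equivalently, $\omega_Q$ is a nonnegative combination of the simple roots of $M$, which are positive roots of $G$).
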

\begin{proof}
Assume that $\vec{\lambda} = (\lambda_1, \ldots, \lambda_n)$
are weights of $M$, and that $\vec{\lambda} \in \econe{n}{G}$.  By Theorem \ref{EigIneqThm}
we need to show that $\vec{\lambda}$ satisfies inequalities corresponding to Levi-movable cohomology products
$\sigma_{w_1}^M \cdots \sigma_{w_n}^M = \text{[pt]}$.  But by assumption, the product
$\sigma_{w_1} \cdots \sigma_{w_n}$ over $G/P$ is non-zero, and therefore by Theorem \ref{EigIneqThmMult}, $\vec{\lambda}$
must satisfy the corresponding inequality: $\sum_i \langle \omega_P, w_i^{-1} \lambda_i \rangle \leq 0$.  
But since the Killing form and Weyl group action of $M$ is preserved by the inclusion of dominant chambers
$\mathfrak{h}_{+,M} \subseteq \mathfrak{h}_{+,G}$, 
this is the same as the inequality associated to $\sigma_{w_1}^M \cdots \sigma_{w_n}^M$, finishing the
proof.
\end{proof}

\subsection{The eigencone and cohomology in types B and C}

The additive eigencone is determined by the Coxeter system associated to $G$, that is, the Weyl group
$W$ of $G$ together with its action on the weight space $\mathfrak{h}^*$ \cite{KLMJ09}.  This implies that 
the eigencones of $\text{Sp}(2r)$ and $\text{SO}(2r+1)$ are isomorphic since their Coxeter systems 
are identical, even though their root systems are not.

Let $G=\text{Sp}(2r)$ and $H=\text{SO}(2r+1)$.  The isomorphism of the of the Weyl groups of $G$ and $H$
induces a correspondence between Schubert cells of $G/P$ and $H/P^H$, where $P$ is a maximal
parabolic of $G$ and $P^H$ is the corresponding maximal parabolic of $H$.  In fact, the cohomology rings
of $G$ and $H$ are isomorphic via a graded isomorphism (see the appendix of \cite{BKISO}). Then the isomorphism
of the eigencones of $G$ and $H$ implies the following proposition.

\begin{prop}\label{BCProdDual}
For every Levi-movable product $\sigma_{w_1} \cdots \sigma_{w_n} = 1\cdot\normtext{[pt]}$, the corresponding product 
$\sigma_{w_1}^H \cdots \sigma_{w_n}^H$ over $H/P^H$ is equal to $1\cdot\normtext{[pt]}$ and is Levi-movable.
\end{prop}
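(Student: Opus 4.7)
The plan is to combine Ressayre's irredundancy (Theorem~\ref{EigIneqThm}) with the equality $\econe{n}{G} = \econe{n}{H}$ coming from the common Coxeter system. By hypothesis, the Levi-movable product $\sigma_{w_1} \cdots \sigma_{w_n} = 1 \cdot \normtext{[pt]}$ on $G/P$ yields an irredundant facet inequality $\sum_i \langle \omega_P, w_i^{-1} \lambda_i \rangle \leq 0$ of $\econe{n}{G}$. Under the identification of Cartan subalgebras coming from the Coxeter isomorphism $W_G \cong W_H$, this is simultaneously an irredundant facet inequality of $\econe{n}{H}$, so by Theorem~\ref{EigIneqThm} applied to $H$ it must arise from a \emph{unique} Levi-movable product $\sigma_{u_1}^H \cdots \sigma_{u_n}^H = 1 \cdot \normtext{[pt]}$ on some $H/Q$.

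The remaining task is to identify $(Q, u_1, \ldots, u_n)$ with $(P^H, w_1^H, \ldots, w_n^H)$, where $P^H$ is the maximal parabolic of $H$ corresponding to $P$ and $w_i^H$ is the image of $w_i$ under the Coxeter isomorphism. I would read the data off the facet inequality itself: the stabilizer of $\omega_P$ in $W_G$ is $W_P$, which under the identification coincides with $W_{P^H}$, so $\omega_P$ and $\omega_{P^H}$ are positive scalar multiples of one another and cut out the same facet normal direction; this forces $Q = P^H$. Matching the coefficient of each $\lambda_i$ then gives $w_i^{-1} \omega_P$ proportional to $u_i^{-1} \omega_{P^H}$, and since each $w_i$ and $u_i$ is the minimal length representative of its coset in $W/W_P = W/W_{P^H}$, this pins down $u_i = w_i^H$. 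Multiplicity one and Levi-movability on the $H$-side are then automatic, since they are built into the Theorem~\ref{EigIneqThm} parametrization of the irredundant facet to which the $H$-product corresponds.

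The main obstacle is the scalar bookkeeping between the two root systems. The Killing form is normalized by $\langle \theta, \theta \rangle = 2$, but the highest root $\theta$ is long in type $\normtext{C}_r$ and short in type $\normtext{B}_r$, so the fundamental weights of $G$ and $H$ need not agree on the nose under the Coxeter identification. What must be checked is that the rescaling between each $\omega_i^G$ and $\omega_i^H$ is by a single positive constant, so that the $G$- and $H$-facet inequalities genuinely cut out the same half-space rather than parallel but distinct ones. Once this bookkeeping is settled, the irredundancy-driven injectivity of the product-to-inequality correspondence closes the argument.
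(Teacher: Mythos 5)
Your proposal is correct, but it takes a genuinely different middle route from the paper. The paper's proof also begins with the observation that a Levi-movable product equal to $1\cdot\normtext{[pt]}$ cuts out an irredundant (regular) facet, but it then invokes the graded ring isomorphism $\normtext{H}^*(G/P)\cong \normtext{H}^*(H/P^H)$ (appendix of \cite{BKISO}) to conclude that the corresponding product on $H/P^H$ is non-zero, hence by Theorem \ref{EigIneqThmMult} gives a valid inequality for $\econe{n}{H}$; since it is the \emph{same} inequality, it defines a regular facet, and the fact that inequalities attached to products equal to $m\normtext{[pt]}$ are pairwise distinct even up to scalar (cited from Belkale--Kumar) forces the product to be the Levi-movable one with $m=1$. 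You never touch the cohomology ring isomorphism or Theorem \ref{EigIneqThmMult}: you use the equality of the two cones plus Theorem \ref{EigIneqThm} for $H$ to produce \emph{some} Levi-movable product cutting that facet, and then recover its parametrizing data from the supporting functional; your recovery step is in effect a direct proof of the distinctness statement the paper cites. Three small points to tighten. First, the coefficient of $\lambda_i$ in the inequality is $w_i\omega_P$, not $w_i^{-1}\omega_P$, since $\langle\omega_P,w_i^{-1}\lambda_i\rangle=\langle w_i\omega_P,\lambda_i\rangle$; the dominant representative of its $W$-orbit recovers $\omega_P$ up to positive scale, distinct fundamental weights are never proportional (so $Q=P^H$), and $\normtext{Stab}_W(\omega_P)=W_P$ together with minimal length pins down $u_i=w_i$. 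Second, the scalar bookkeeping works out per node rather than globally: under the identification $\omega_i^{C}=\omega_i^{B}$ for $i<r$ while $\omega_r^{C}=2\omega_r^{B}$, and the normalized Killing forms differ by an overall factor of $2$; since only one fundamental weight occurs in a given inequality, this is harmless. Third, you should rule out that the facet is cut by a chamber (dominance) inequality of $H$ rather than by a Levi-movable product; this is immediate because a dominance inequality involves a single slot $\lambda_i$, whereas your functional has non-zero component $w_i\omega_P$ in every slot (and $n\geq 3$). As for what each route buys: the paper's argument is shorter because the ring isomorphism is already available and no identification argument is needed, while yours makes the proposition a purely eigencone-theoretic consequence of Belkale--Kumar, Ressayre, and the Kapovich--Leeb--Millson Coxeter-determination of the cone, at the cost of the (easy) identification and exclusion steps above.
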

\begin{proof}
Since $\sigma_{w_1} \cdots \sigma_{w_n} = 1[pt]$ is Levi-movable, it corresponds to
a regular face of the eigencone, or in other words the corresponding inequality is irredundant.  
Since the cohomology rings of $G/P$ and $H/P^H$ are isomorphic, the product $\sigma_{w_1}^H \cdots \sigma_{w_n}^H$
is non-zero, and therefore corresponds to an inequality a point in
the eigencone of $H$ must satisfy.  But this inequality is the same as the inequality corresponding
to $\sigma_{w_1} \cdots \sigma_{w_n}$, and therefore defines a regular face of the eigencone.  Since 
the inequalities coming from products $\sigma_{w_1}^H \cdots \sigma_{w_n}^H = m[pt]$ are pairwise distinct
even up to scalars (see the beginning of section 8 in \cite{BK}), the product in question must be Levi-movable and equal
to a point with multiplicity one.
\end{proof}

\section{Cohomology of isotropic Grassmannians}\label{PrelimSection}

In this section we introduce the subgroup $M \cong \text{Sp}(2s) \times \text{Sp}(2(r-s))$ of $G=\text{Sp}(2r)$, and compare 
the codimensions of Schubert cells in flag varieties associated to $M$ and $G$.  Our main result in this section is Proposition 
\ref{ExpDimProp} which gives the expected dimension of intersections of Schubert varieties in $G/P$ associated
to cohomology products in $M/Q$ that parametrize the regular facets of the eigencone of $M$.

\subsection{Preliminaries on $G$ and $M$}

Let $\alpha_1, \ldots, \alpha_r \in \mathfrak{h}^*$ be the simple roots of $G$, where $\mathfrak{h}$ is a Cartan algebra of
$\mathfrak{g} = Lie(G)$, and where we have chosen a Borel subgroup $B \subseteq G$, fixing the ordering of the roots.
Let $V=\mathbb{C}^{2r}$.  We fix an ordered basis $e_1, e_2, \ldots, e_r, e_r', e_{r-1}', \ldots, e_1'$,
and let $(,)$ be the non-degenerate symplectic form satisfying
\begin{enumerate}
\item $(e_i, e_j') = \delta_{ij}$
\item $(e_i, e_j) = 0$
\item $(e_i', e_j') = 0$.
\end{enumerate}
Then a maximal torus $T$ of $G$ is given by matrices of the form $\text{diag}(a_1, a_2, \ldots, a_r, a_r^{-1}, \ldots, a_1^{-1})$.
Let $\epsilon_1, \ldots, \epsilon_r$ denote the characters of $T$ such that $\epsilon_i(\text{diag}(a_1, \ldots, a_i, \ldots)) = a_i$.  These characters correspond to an orthonormal basis of $\mathfrak{h}^*$ with respect to the Killing form $\langle, \rangle$.
Then following Bourbaki \cite[VI.4.6]{BLIE46},
the roots of $G$ are $\pm \epsilon_i \pm \epsilon_j \neq 0$, and the positive roots are 
$\epsilon_i - \epsilon_j$ for $1 \leq i < j \leq r$ and $\epsilon_i + \epsilon_j$ for $1 \leq i \leq j \leq r$.  
The simple roots of $G$ are then $\epsilon_i - \epsilon_{i+1}$ for $1 \leq i < r$ and $2\epsilon_r$.  
The fundamental weights $\omega_1, \ldots, \omega_r$ of $G$ are given by $\omega_i = \epsilon_1 +\epsilon_2 + \cdots + \epsilon_i$.

Let $\tau$ be the element of $T$ corresponding
to $\text{diag}(-1, \ldots, -1, 1, \ldots, 1, -1, \ldots, -1)$, where there are $2(r-s)$ positive ones.  Let 
$M = C_G(\tau) \cong \text{Sp}(2s) \times \text{Sp}(2(r-s))$, where $C_G(\cdot)$ denotes the centralizer in $G$.  Then 
the roots of $M$ can be identified with the roots $\alpha$ of $G$ such that $\alpha(\tau) = 1$.  The simple roots $\beta_i$ of
$M$ are therefore
\[
\epsilon_1 - \epsilon_2, \ldots, \epsilon_{s - 1} - \epsilon_{s}, 2\epsilon_{s}
\]
for the first factor and
\[
\epsilon_{s+1} - \epsilon_{s+2}, \ldots, \epsilon_{r-1} - \epsilon_{r}, 2\epsilon_r 
\]
for the second factor.  In terms of the simple roots of $G$ these are
\[
\alpha_1, \ldots, \alpha_{s-1}, 2\alpha_{s} + 2\alpha_{s+1} + \cdots + 2\alpha_{r-1} + \alpha_r
\]
and
\[
\alpha_{s+1}, \ldots, \alpha_{r},
\]
respectively.  

It is easy to see that the fundamental weights $\nu_1, \ldots \nu_r$ of $M$  are given by 
$\nu_i = \omega_i$ for $1 \leq i \leq s$ and $\nu_i = \omega_i - \omega_s$ for $s+1 \leq i \leq r$.  
So clearly if $\lambda = \sum_i a_i \omega_i$ then
\[
\lambda = \sum_{i=1}^{s-1} a_i \nu_i + (a_s + \cdots + a_r)\nu_s + \sum_{i=s+1}^r a_i \nu_i.
\]
Therefore the dominant chamber of $G$ is contained in the dominant chamber of $M$, but they are not identical.
However, the first factor $M_1 = \text{Sp}(2s)$ of $M$ has a dominant chamber contained in the dominant chamber of $G$.

\subsection{Weyl groups of $G$ and $M$ and the codimension of Schubert varieties}

We need to understand the relationship between the Weyl groups of $G$ and $M$ in order to understand the relationship
between certain cohomology products which parametrize the faces of the eigencones.  To begin with, note that the Cartan
algebras of $G$ and $M$ can be identified, and furthermore a common (normalized) Killing form can be chosen.  Therefore
the root system of $M$ can be identified as above as a sub-root system of the root system of $G$, and therefore the Weyl
group $W_M$ of $M$ is contained in the Weyl group $W$ of $G$.  Now let $s_1, \ldots, s_r$ be the simple reflections generating
$W$, and $t_1, \ldots, t_r$ the simple reflections of $W_M$.  Then it is easy to see that $t_i = s_i$ in $W$ for $i \neq s$, and
$t_s = s_s s_{s+1} \cdots s_{r-1} s_r s_{r-1} \cdots s_s$.  Therefore:
\begin{lemma}
The Weyl group $W_M$ is identified with the subgroup of $W$ generated by the simple reflections \\
$s_1, \ldots, s_{s-1}, s_{s+1}, \ldots, s_r$ and the reflection $s_s \cdots s_{r-1} s_r s_{r-1} \cdots s_s$,
which corresponds to the orthogonal reflection with respect to $2\alpha_{s} + 2\alpha_{s+1} + \cdots + 2\alpha_{r-1} + \alpha_r$.
\end{lemma}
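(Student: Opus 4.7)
The plan is to verify the lemma by directly comparing generators of $W_M$ with elements of $W$. First, using the identification of the Cartan subalgebras $\mathfrak{h}_M = \mathfrak{h}$ and the common normalized Killing form, the root system of $M$ sits inside the root system of $G$ as explicitly described in the preceding subsection, and the orthogonal reflection in $\mathfrak{h}^*$ along any root $\beta$ of $M$ literally coincides with $s_\beta \in W$. Therefore $W_M$, as an abstract Weyl group, embeds into $W$ as the subgroup generated by $\{s_\beta : \beta \text{ a simple root of } M\}$, and it suffices to identify each of these generators in terms of the simple reflections $s_1, \ldots, s_r$ of $W$.

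For every simple root of $M$ other than $2\epsilon_s$, the root is already a simple root of $G$: the list $\alpha_1, \ldots, \alpha_{s-1}$ from the first factor and $\alpha_{s+1}, \ldots, \alpha_r$ from the second factor. Hence the corresponding reflections in $W_M$ are immediately $s_1, \ldots, s_{s-1}, s_{s+1}, \ldots, s_r$ in $W$.

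The only step requiring computation is the identification of $s_{2\epsilon_s}$. I would use the standard conjugation identity $u\, s_\alpha\, u^{-1} = s_{u\alpha}$, applied with $\alpha = \alpha_r = 2\epsilon_r$ and $u = s_s s_{s+1} \cdots s_{r-1}$, whose inverse is $u^{-1} = s_{r-1}\cdots s_{s+1} s_s$. A short induction using $s_{\alpha_i}(\epsilon_{i+1}) = \epsilon_i$ for $s \leq i \leq r-1$ gives
\[
u(2\epsilon_r) = (s_s s_{s+1} \cdots s_{r-1})(2\epsilon_r) = 2\epsilon_s,
\]
so $s_s s_{s+1} \cdots s_{r-1} s_r s_{r-1} \cdots s_{s+1} s_s = s_{2\epsilon_s}$. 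Finally, expanding $\epsilon_s = \alpha_s + \alpha_{s+1} + \cdots + \alpha_{r-1} + \tfrac{1}{2}\alpha_r$ yields $2\epsilon_s = 2\alpha_s + 2\alpha_{s+1} + \cdots + 2\alpha_{r-1} + \alpha_r$, matching the stated root.

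There is no real obstacle here; the content is a bookkeeping calculation, and the only care required is to track the conjugation $u s_r u^{-1}$ in the correct order so that the resulting palindromic word $s_s \cdots s_{r-1} s_r s_{r-1} \cdots s_s$ comes out exactly as stated.
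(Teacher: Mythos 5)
Your proposal is correct and follows essentially the same route as the paper: identify the root system of $M$ inside that of $G$ so that $W_M$ embeds as the reflection subgroup generated by $s_\beta$ for the simple roots $\beta$ of $M$, note that all of these except $s_{2\epsilon_s}$ are already simple reflections of $W$, and express $s_{2\epsilon_s}$ as the palindromic word via $u s_{\alpha_r} u^{-1} = s_{u\alpha_r}$ with $u = s_s \cdots s_{r-1}$. The paper simply asserts this identification of $t_s$ as ``easy to see,'' so your conjugation computation is exactly the omitted verification, and the check $2\epsilon_s = 2\alpha_s + \cdots + 2\alpha_{r-1} + \alpha_r$ matches the stated root.
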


We are interested in the relationship between the cohomology of $G/P \cong \text{IG}(k, r)$ and $M/Q$, where $P$ is a maximal parabolic and
$Q = M \cap P$.  In particular we are interested in the cases where $k \leq s$, so that
we have  $M/Q \cong \text{IG}(k, s)$, and $M/Q$ is identified with the $k$-dimensional isotropic
subspaces in $\text{IG}(k, r)$ contained in a certain $s$-dimensional subspace of $\mathbb{C}^{2n}$ (see section \ref{ProperSec}).
The Schubert cells of these spaces are parametrized by cosets in $W/W_P$ and $W_M/W_Q$, which have minimal length
representatives; the sets of minimal length representatives are denoted $W^P$ and $W_M^Q$, respectively.  
Then we have the following proposition.

\begin{prop}
The inclusion $W_M \subseteq W$ induces an inclusion $W_M^Q \subseteq W^P$ where $P$ is chosen as above.
\end{prop}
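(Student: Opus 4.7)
My plan is to use the standard length characterization of minimal coset representatives: $w \in W^P$ if and only if $w$ sends every simple root of the Levi of $P$ to a positive root of $G$, and similarly for $W_M^Q$. Writing $P$ as the maximal parabolic of $G$ omitting $\alpha_k$ (with $k \leq s$), this becomes $W^P = \{w : w(\alpha_i) > 0 \text{ for all } i \neq k\}$, and the analogous condition for $W_M^Q$ is $w(\beta_i) > 0$ for $i \neq k$. Since the positive root system of $M$ is the restriction of the positive root system of $G$ (compatible Borels), and since $\beta_i = \alpha_i$ for $i \neq s$, the hypothesis $w \in W_M^Q$ directly supplies $w(\alpha_i) > 0$ for every $i \in \{1, \ldots, r\} \setminus \{k, s\}$. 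If $k = s$ the argument is complete, so the remaining case is $k < s$, where one must additionally establish $w(\alpha_s) > 0$, and this is the heart of the proof.

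For this I would pass to the signed permutation description of $W$, writing $w(\epsilon_i) = \eta_i \epsilon_{\sigma(i)}$ for a permutation $\sigma$ of $\{1, \ldots, r\}$ and signs $\eta_i \in \{\pm 1\}$. Because $w \in W_M = W_{M_1} \times W_{M_2}$, the permutation $\sigma$ preserves the blocks $\{1, \ldots, s\}$ and $\{s+1, \ldots, r\}$ setwise. The hypothesis $w(\beta_s) > 0$, i.e. $w(2\epsilon_s) > 0$, forces $\eta_s = +1$, and therefore $w(\epsilon_s) = \epsilon_j$ for some $j \in \{1, \ldots, s\}$.

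Next I would extract the corresponding information on the second block from the conditions $w(\alpha_i) > 0$ for $i = s+1, \ldots, r$, all of which are part of the hypothesis because $\beta_i = \alpha_i$ for these indices and $i \neq k$. Starting from $w(\alpha_r) = w(2\epsilon_r) > 0$ to conclude $\eta_r = +1$, then propagating backward through the inequalities $w(\epsilon_i - \epsilon_{i+1}) > 0$ (note that $\eta_i = -1$ would make the image $-\epsilon_{\sigma(i)} - \epsilon_{\sigma(i+1)}$, a negative root), one obtains $\eta_i = +1$ and $\sigma(i) < \sigma(i+1)$ for every $i \in \{s+1, \ldots, r\}$. Since $\sigma$ preserves $\{s+1, \ldots, r\}$ and is increasing on it, the restriction is the identity, and in particular $w(\epsilon_{s+1}) = \epsilon_{s+1}$.

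Combining the two blocks, $w(\alpha_s) = w(\epsilon_s) - w(\epsilon_{s+1}) = \epsilon_j - \epsilon_{s+1}$ with $j \leq s$, which is a positive root. This completes the verification that $w \in W^P$. The one delicate point, and the step I would flag as the main obstacle, is that the bare condition $w(\beta_s) > 0$ is insufficient to control $w(\alpha_s)$ by itself: one genuinely needs the full package of inequalities on the second block in order to pin down both the signs and the order behavior of $w$ on $\{\epsilon_{s+1}, \ldots, \epsilon_r\}$. The fact that $W_Q$ contains all of $s_{s+1}, \ldots, s_r$ alongside $s_{\beta_s}$ is exactly what makes the argument go through.
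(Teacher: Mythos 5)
Your proof is correct and follows essentially the same route as the paper: both reduce the claim to checking $w\alpha_i>0$ for the simple roots of the Levi of $P$, observe that only $\alpha_s$ is at issue, and settle that case by combining $w(\beta_s)>0$ with the fact that $w$ acts trivially on the second block spanned by $\epsilon_{s+1},\dots,\epsilon_r$. The only difference is one of execution: you work in signed-permutation ($\epsilon$-) coordinates and rederive the trivial action on the second block from the positivity conditions, whereas the paper gets it from the decomposition $W_M=W_{M_1}\times W_{M_2}$ and argues in simple-root coordinates.
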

\begin{proof}
Let $I$ be the set of simple reflections generating $W_P$, and $I_M$ be the set of simple reflections (simple in $W_M$)
generating $W_Q$.  In Chapter 1 of \cite{HUM}, Humphreys shows that $W^P$ is equal to the set
\[
W^I = \{ w \in W \mid \ell(ws_\alpha) > \ell(w) \text{ for all } s_\alpha \in I \}.
\]
Also he shows that $\ell(ws_\alpha) > \ell(w)$ exactly when $w \alpha > 0$.  Suppose $w \in W_M^Q$.  Now $W_M$ is actually a product of the
Weyl groups of the two factors of $M = M_1 \times M_2$, and by the choice of $P$, any minimal representative in $W_M^Q$
has a trivial $M_2$ part.  Therefore $w$ acts trivially on any $\alpha_i$ for $i>s$, and therefore $w \alpha_i >0$
in this case.  For a simple reflection $s_{\alpha_i} \in I$ where $i < s$, we have $w \alpha_i >0$ by the assumption that $w \in W_M^Q$,
since $s_{\alpha_i} \in I_Q$.  Now consider $s_{\alpha_s} \in I$.  Then letting 
$\beta_s = 2\alpha_{s} + 2\alpha_{s+1} + \cdots + 2\alpha_{r-1} + \alpha_r$, we know that $s_{\beta_s} \in I_Q$ and
$w\beta_s > 0$ with respect to the ordering of the first factor.  But 
$w(2\alpha_{s+1} + \cdots + 2\alpha_{r-1} + \alpha_r) = 2\alpha_{s+1} + \cdots + 2\alpha_{r-1} + \alpha_r$, and therefore if
$w \alpha_s$ is negative, it must only contain roots $\alpha_i$ with $i > s$.  This is impossible.  Therefore $w \alpha_s > 0$
and so $w \in W^P$.
\end{proof}

Now denote the Schubert cell in $G/P$ associated to $w \in W^P$ as $C_w$, and the Schubert cell in $M/Q$ 
associated to $w \in W_M^Q$ as $C_w^M$.  One important property of the relationship between the cell decompositions
of $G/P$ and $M/Q$ is that for $w \in W_M^Q$, the dimension of $C_w$ may be higher than the dimension of $C_w^M$.
For example, in the case of the big cell in $M/Q$, the jump is the difference in dimension between $G/P$ and $M/Q$.

\begin{comment}
\begin{prop}
Let $w_0 \in W_M^Q$ correspond to the big cell $C_{w_0}^M$ in $M/Q$.  Then $C_{w_0}$ is the big cell
in $G/P$.
\end{prop}
\begin{proof}
We know that $w_0^{-1} R^+_M \cap R^+_M \setminus R^+_L = \varnothing$, and we need to show the same for
$w_0^{-1} R^+ \cap R^+\setminus R^+_L$.  Assume  that  $\alpha \in R^+\setminus R^+_L$ and  $w_0\alpha \in R^+$.  We can
furthermore assume that $\alpha \notin R^+_M$. Then as above, $w_0$ 
will act trivially on the roots of the second factor of $M$, so by adding roots from this factor we can assume that
$\alpha = \alpha_m + \alpha_{m+1} + \cdots + \alpha_s + 2\alpha_{s+1} + \cdots + 2\alpha_{r-1} + \alpha_r$ for some $m$.
For the same reason, the root 
$\beta = \alpha_m + \alpha_{m+1} + \cdots + \alpha_s$ satisfies $w_0 \beta \in R^+$.  But then $\alpha + \beta \in R^+_M$
and $w_0(\alpha + \beta) \in R^+$, which contradicts the fact that $w_0^{-1} R^+_M \cap R^+_M \setminus R^+_L = \varnothing$.
\end{proof}
\end{comment}

In general we can calculate the difference in dimension in terms of certain characters associated to $w$.  
As in section \ref{EigenconeSec} let
\[
\chi_w = \sum_{\alpha \in w^{-1}R^+ \cap R^+ \setminus R^+_L} \alpha
\]
where $R^+_L$ is the set of positive roots of the Levi $L$ of $P$. Let $\chi_w^M$ be defined in the same way, 
but only including roots of $M$.  Also, let $x_1, \ldots, x_r \in \mathfrak{h}$ be the dual
basis of $\alpha_1, \ldots, \alpha_r$.  Then we have the following.

\begin{prop}\label{ChiProp}
For $w \in W_M^Q$ we have
\[
\normtext{codim}(C_w) - \normtext{codim}(C_w^M) = (\chi_w - \chi_w^M)(x_P)
\]
where $\alpha_p$ is the root corresponding to $P$.
\end{prop}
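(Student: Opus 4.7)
The plan is to express both sides of the identity in terms of sums over subsets of positive roots and then compare them. First I will use the standard fact that for $w \in W^P$ the inversions of $w$ all lie in $R^+ \setminus R^+_L$; combined with $\dim C_w = \ell(w)$ and $\dim G/P = |R^+ \setminus R^+_L|$, this yields
\[
\normtext{codim}(C_w) = |w^{-1}R^+ \cap R^+ \setminus R^+_L|,
\]
and similarly $\normtext{codim}(C_w^M) = |w^{-1}R^+_M \cap R^+_M \setminus R^+_{L_M}|$. Denoting these two index sets $S_w$ and $S_w^M$ respectively, the key observation is that since $w \in W_M$ preserves $R_M$ and $R^+_{L_M} = R^+_M \cap R^+_L$, a direct check shows $S_w^M = S_w \cap R_M$; in particular $S_w^M \subseteq S_w$, and the left-hand side of the proposition equals $|S_w \setminus R_M|$.

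For the right-hand side, $\chi_w(x_P) = \sum_{\alpha \in S_w} \alpha(x_P)$ directly from the definition of $\chi_w$, and $\alpha(x_P)$ is just the coefficient of the simple root $\alpha_P$ in the expansion of $\alpha$ (since the $x_i$ form the basis dual to the $\alpha_i$). Therefore
\[
(\chi_w - \chi_w^M)(x_P) = \sum_{\alpha \in S_w \setminus R_M} \alpha(x_P),
\]
and it will suffice to prove $\alpha(x_P) = 1$ for every $\alpha \in S_w \setminus R_M$. Equivalently, no positive root of $G$ lying outside $R_M$ should have coefficient $\geq 2$ on the simple root $\alpha_P$.

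The main, and essentially only substantive, obstacle is this last claim, which I will verify by a direct check in type $C_r$ using Bourbaki's description of the positive roots. The elements of $R^+ \setminus R_M$ are precisely the mixed roots $\epsilon_i - \epsilon_j$ and $\epsilon_i + \epsilon_j$ with $i \leq s < j$, while $P$ corresponds to a simple root $\alpha_k$ with $k \leq s$. Writing each such mixed root in the simple root basis, the coefficient of $\alpha_k$ is either $0$ or $1$: the coefficient $2$ in type $C_r$ appears only at the positions $\alpha_j, \ldots, \alpha_{r-1}$ in the expansion of $\epsilon_i + \epsilon_j$ (with $j<r$), all of which are excluded by the constraint $j > s \geq k$. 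Combined with the reduction above, this finishes the proof.
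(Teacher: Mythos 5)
Your proof is correct and follows essentially the same route as the paper: the codimension formula $\normtext{codim}(C_w)=|w^{-1}R^+\cap R^+\setminus R^+_L|$, the identification of the set counted by the codimension difference with $S_w\setminus R_M$, and the observation that every relevant root outside $R_M$ has $\alpha_P$-coefficient exactly one (where the paper says ``easy to see,'' you supply the explicit type $\normtext{C}_r$ root expansions, and the lower bound $\alpha(x_P)\geq 1$ is automatic since such $\alpha$ lies in $R^+\setminus R^+_L$). No changes needed.
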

\begin{proof}
It is a fact (see \cite[Lemma 16]{BK06}) that $\normtext{codim}(C_w) = | w^{-1} R^+ \cap R^+ \setminus R^+_L|$ and similarly 
$\normtext{codim}(C_w^M) = | w^{-1} R^+_M \cap R^+_M \setminus R^+_L| = | w^{-1} R^+ \cap R^+_M \setminus R^+_L|$.
It is easy to see that the coefficient of $\alpha_P$ for the roots in $R^+ \setminus R^+_M$ is always one, and
since $(\chi_w - \chi_w^M)(x_P)$ is equal to the coefficient of $\alpha_P$ in $\chi_w - \chi_w^M$, the result follows.
\end{proof}

Each Schubert cell in $M/Q$ and $G/P$ corresponds to a subset of $\{1, \ldots, 2s\}$ or $\{1, \ldots, 2r\}$,
which indicates how a subspace in the cell intersects a chosen flag.  For example, the class of a point in 
$G/P \cong \text{IG}(k,2r)$ corresponds to $\{1, 2, \ldots, k\}$, and the big cell corresponds to $\{2r-k+1, \ldots, 2r \}$.
For more details see section 4 in \cite{BKT09}.
For a Weyl group element $w \in W_M^Q$,
let $I_w^M$ and $I_w$ denote the subsets corresponding to $C_w^M$ and $C_w$, respectively.  Note
that $I_w$ is constructed from $I_w^M$ by adding $2(r-s)$ to each element $i \in I_w^M$ such that $i>s$.  We make
the following definition.

\begin{defin}
For a set of integers $I$ and an integer $m$, let $|I \leq m|$, $|I \geq m|$, $|I < m|$, and $|I > m|$ denote the number of elements $i \in I$
such that $i \leq m$, $i \geq m$, $i < m$, or $i > m$, respectively.  
If $J$ is another set of integers, let $|I \leq J|$, $|I \geq J|$, $|I < J|$, and
$|I > J|$ be the number of pairs $(i,j)$ such the $i \leq j$, $i \geq j$, $i < j$, or
$i > j$, respectively.
\end{defin}

Then we can also calculate the difference in codimension between $C_w^M$ and $C_w$ in terms
of the associated subsets $I_w^M$ and $I_w$.

\begin{prop}\label{CodimProp2}
For $w \in W_M^Q$ we have
\[
\normtext{codim}(C_w) - \normtext{codim}(C_w^M) = 2(r-s)|I^M_w \leq s|.
\]
\end{prop}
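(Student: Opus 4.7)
The plan is to work directly with the root-theoretic codimension formula recorded in the proof of Proposition \ref{ChiProp}: $\text{codim}(C_w) = |w^{-1}R^+ \cap R^+ \setminus R^+_L|$, and similarly $\text{codim}(C_w^M) = |w^{-1} R^+ \cap R^+_M \setminus R^+_L|$. Since $R^+_M \subseteq R^+$, the difference in codimensions equals the number of roots in $w^{-1}R^+ \cap (R^+ \setminus R^+_M) \setminus R^+_L$. So the whole argument reduces to counting this set.

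First I would enumerate $(R^+ \setminus R^+_M) \setminus R^+_L$ explicitly. The ``crossing'' roots $R^+ \setminus R^+_M$ are exactly $\epsilon_i - \epsilon_j$ and $\epsilon_i + \epsilon_j$ with $i \leq s < j$. For such a root to avoid $R^+_L$ (the positive roots of the Levi of $P$, which corresponds to $\omega_k$ with $k \leq s$) it is necessary and sufficient that the small index satisfies $i \leq k$. This produces exactly $2k(r-s)$ roots, organized by the pair $(i,j)$ with $1 \leq i \leq k$ and $s < j \leq r$.

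Next I would use the fact, noted earlier, that minimal representatives $w \in W_M^Q$ have trivial $M_2$-part and hence act trivially on $\epsilon_{s+1}, \ldots, \epsilon_r$. Thus $w(\epsilon_i \pm \epsilon_j) = w(\epsilon_i) \pm \epsilon_j$, and a two-line case check shows: if $w(\epsilon_i) = +\epsilon_m$ for some $m \leq s$, then both $\epsilon_i - \epsilon_j$ and $\epsilon_i + \epsilon_j$ are sent to positive roots (since $m \leq s < j$), whereas if $w(\epsilon_i) = -\epsilon_m$, both are sent to negative roots. Hence the count is exactly $2(r-s) \cdot \#\{i \in \{1,\ldots,k\} : w(\epsilon_i) > 0\}$.

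The final step is to identify this with $|I_w^M \leq s|$. Under the standard convention (e.g.\ as in \cite{BKT09}) that $I_w^M \subseteq \{1,\ldots,2s\}$ records the signed-permutation image $\{w(1),\ldots,w(k)\}$, with $+\epsilon_m$ encoded by position $m$ and $-\epsilon_m$ by position $2s+1-m$, an index $i \leq k$ contributes an element of $I_w^M$ in $\{1,\ldots,s\}$ precisely when $w(\epsilon_i)$ is a positive multiple of some $\epsilon_m$. This yields the desired equality. The only genuinely delicate point is pinning down the correspondence between the Weyl-group action and the subset encoding of Schubert cells; once these conventions are matched, the counting itself is routine.
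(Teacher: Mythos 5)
Your argument is correct, but it is not the route the paper takes, so let me compare. The paper proves the equivalent dimension statement by invoking the combinatorial formula of Proposition 32 in \cite{BKISO}, $\text{dim}(C_w) = |I_w > \widetilde{I}_w| + \tfrac{1}{2}(|I_w > \bar{I}_w| + |I_w > r|)$, observing that the last two terms are unchanged in passing from $I_w^M$ to $I_w$, and that the first term grows by exactly $2(r-s)|I_w^M > s|$ because $I_w$ is obtained from $I_w^M$ by shifting the elements above $s$ by $2(r-s)$. You instead count inversions directly: starting from $\text{codim}(C_w) = |w^{-1}R^+ \cap R^+\setminus R^+_L|$, the difference of codimensions is the number of ``crossing'' roots $\epsilon_i \pm \epsilon_j$ ($i \leq k$, $s < j \leq r$) outside $R^+_L$ that $w$ keeps positive, and since $w$ has trivial $M_2$-part these come in blocks of $2(r-s)$, one block for each $i \leq k$ with $w(\epsilon_i) > 0$. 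Your enumeration of $(R^+\setminus R^+_M)\setminus R^+_L$ and the sign analysis are both right, and your final dictionary $\#\{i \leq k : w(\epsilon_i) > 0\} = |I_w^M \leq s|$ is the one the paper itself uses implicitly (it is consistent with the point corresponding to $\{1,\ldots,k\}$, with the shift relation between $I_w^M$ and $I_w$, and with the use of Lemma 39 of \cite{BKISO} in Proposition \ref{ChiHProp}), so the convention-matching you flag is genuinely the only delicate step and it does go through. What each approach buys: yours is more self-contained (it needs only the inversion-set codimension formula already quoted in the proof of Proposition \ref{ChiProp}, not the dimension formula of \cite{BKISO}), and it makes transparent why the factor $2(r-s)$ appears (two crossing roots for each $j \in \{s+1,\ldots,r\}$); the paper's proof stays entirely on the index-set side, so it never has to translate between signed-permutation values and jump sets, only to use the purely combinatorial shift relation between $I_w^M$ and $I_w$.
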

\begin{proof}
We prove the equivalent statement that
\[
\normtext{dim}(C_w) - \normtext{dim}(C_w^M) = 2(r-s)|I^M_w > s|.
\]
(Recall that $\normtext{dim}(\normtext{IG}(k,2r)) = \frac{k}{2}(4r-3k+1)$, so that 
$\normtext{dim}(\normtext{IG}(k,2r)) - \normtext{dim}(\normtext{IG}(k,2s)) = 2k(r-s)$.)
Now Proposition 32 in \cite{BKISO} states that
\[
\text{dim}(C_w) = |I_w > \widetilde{I}_w| + \frac{1}{2}(|I_w > \bar{I}_w| + |I_w > r|),
\]
where if $I_w = \{i_1 < i_2 < \cdots < i_k\}$,
\begin{align*}
\bar{I}_w &= \{2r+1-i_1, 2r+1-i_2, \ldots, 2r+1-i_k\} \\
\widetilde{I}_w &= \{1,2,\ldots, 2r\} \setminus (I_w \sqcup \bar{I}_w).
\end{align*}
The formula for $\text{dim}(C_w^M)$ is similar.
It is immediate that $|I_w > r|= |I_w^M > s|$ and $|I_w > \bar{I}_w| = |I_w^M > \bar{I}_w^M|$.
Then since $I_w$ is constructed from $I_w^M$ by adding $2(r-s)$ to each element $i \in I_w^M$ such that $i>s$,
we see that
\begin{align*}
\normtext{dim}(C_w) - \normtext{dim}(C_w^M) &= |I_w > \widetilde{I}_w| - |I_w^M > \widetilde{I}_w^M| \\
                                                        &= 2(r-s)|I^M_w > s|.
\end{align*}
\end{proof}

The odd orthogonal groups $\text{SO}(2r+1)$ are closely related to the symplectic groups (see section \ref{EigenconeSec}).
In the next section we will use the relationship between the cohomology
of $M$ and $\oddO = \text{SO}(2s+1) \times \text{Sp}(2(r-s))$ to compute the expected dimension of intersections
of Schubert varieties in $G/P$.  We begin by describing the roots of $\oddO$.  

The weight space of $\oddO$ can
be identified with the weight space of $M$, and with respect to this identification the simple roots of $\oddO$
are the same as $M$, except that $\alpha_s$ is replaced with $\frac{1}{2}\alpha_s$.  More precisely, the
roots of the first factor of $\oddO$ are $\pm \epsilon_i \pm \epsilon_j$
for $1 \leq i \neq j \leq s$, and additionally $\pm \epsilon_i$ for $1\leq i \leq s$.
The positive roots are $\epsilon_i \pm \epsilon_j$
for $1 \leq i < j \leq s$, together with $\epsilon_i$ for $1\leq i \leq s$, with the simple roots 
being $\beta_1 = \epsilon_1 - \epsilon_2, \beta_2 = \epsilon_2 - \epsilon_3, \ldots, \beta_{s-1} = \epsilon_{s-1} - \epsilon_s, \beta_s=\epsilon_s$.
The simple roots of the second factor are the same as for $M$, but we will denote them using $\beta$
when referring to $\oddO$.

The two groups $M$ and $H$ clearly then have the same Weyl groups, which we denote $W_M$.  Furthermore, choosing a parabolic $Q$
of $M$, the set $W_M^Q$ is the same as the set of minimal length representatives for the corresponding parabolic $Q^\oddO$ of $\oddO$.
Let 
\[
\chi_w^\oddO = \sum_{\beta \in w^{-1}R^+_\oddO \cap R^+_\oddO \setminus R^+_L} \beta.
\]
Then we have the following proposition.

\begin{prop}\label{ChiHProp}
For any $w \in W_M^Q$ we have
\[
(\chi^M_w - \chi_w^\oddO)(x_Q) = |I_w^M \leq s|.
\]
\end{prop}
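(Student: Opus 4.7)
The plan is to exploit that $M$ and $H$ share a Weyl group and a weight space, and that their root systems differ only in the ``short versus long'' convention in the first factor: the first factor of $M$ has the long roots $2\epsilon_i$ while the first factor of $H$ has the short roots $\epsilon_i$, and all other positive roots coincide. I would first decompose $R^+_M = A \sqcup B$ and $R^+_H = A \sqcup C$, where $A$ is the common part (namely $\{\epsilon_i \pm \epsilon_j : 1 \le i < j \le s\}$ in the first factor, together with all positive roots of the second factor), $B = \{2\epsilon_i : 1 \le i \le s\}$, and $C = \{\epsilon_i : 1 \le i \le s\}$. The same decomposition applies to the two Levis: they share a common part $A^L$, and differ only on $B^L = \{2\epsilon_i : k+1 \le i \le s\}$ versus $C^L = \{\epsilon_i : k+1 \le i \le s\}$ (with both empty when $k = s$), so in each case $B \setminus B^L$ and $C \setminus C^L$ are indexed by $1 \le i \le k$.

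Next I would show that the $A$-contribution to $\chi^M_w - \chi^H_w$ vanishes. Since $w \in W_M = W_H$ acts on the weight space as a signed permutation of the $\epsilon$-basis, it sends $A$ into $\pm A$, and whether $w\alpha$ is a positive root is independent of whether we use $M$ or $H$ (because $R^+_M \cap A = R^+_H \cap A$). Combined with the fact that $A^L$ is also the same in both Levis, the $A$-contributions to $\chi^M_w$ and $\chi^H_w$ cancel identically.

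Setting $S = \{i : w(\epsilon_i) \text{ is a positive basis vector}\}$, one has $2\epsilon_i \in w^{-1}R^+_M$ iff $i \in S$ iff $\epsilon_i \in w^{-1}R^+_H$. Combined with the condition $1 \le i \le k$ arising from membership in $B \setminus B^L$ (resp.\ $C \setminus C^L$), the remaining contribution is
\[
\chi^M_w - \chi^H_w \;=\; \sum_{i \in S,\ i \le k}\bigl(2\epsilon_i - \epsilon_i\bigr) \;=\; \sum_{i \in S,\ i \le k} \epsilon_i.
\]
Evaluating at $x_Q = x_{\epsilon_1} + \cdots + x_{\epsilon_k}$ (the coweight of $\mathfrak{h}$ dual to the simple roots of $H$ associated with $Q^H$, which coincides with the $G$-coweight $x_k$) then yields $|S \cap \{1,\ldots,k\}|$.

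Finally I would match this count with $|I^M_w \le s|$ combinatorially. Interpreting $w$ as a signed permutation of $\{1,\ldots,2s\}$ via $w(\epsilon_i) = \epsilon_j \Leftrightarrow w(i) = j$ and $w(\epsilon_i) = -\epsilon_j \Leftrightarrow w(i) = 2s+1-j$, one has $I^M_w = \{w(1),\ldots,w(k)\}$, and $w(i) \le s$ holds precisely when $w(\epsilon_i)$ is a positive basis vector, that is, when $i \in S$. I expect the main subtlety to be the careful bookkeeping at position $s$, where $M$ and $H$ genuinely disagree: in particular, one must be explicit about which coweight $x_Q$ is being used, since the $M$- and $H$-duals to $\beta_k$ differ by a factor of $2$ when $k = s$, and it is the $H$-version (equivalently, the $G$-coweight $x_k$) that makes the right-hand side come out to $|I^M_w \le s|$ rather than half of it.
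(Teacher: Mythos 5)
Your proof is correct, but it takes a genuinely different route from the paper's. The paper never works from the definition of $\chi_w$ as a sum over $(R^+\setminus R^+_L)\cap w^{-1}R^+$: it instead uses the identity $\chi_w = \rho - 2\rho^L + w^{-1}\rho$, computes the differences $\rho^M - \rho^H = \tfrac12\sum_{i=1}^s\epsilon_i$ and $\rho^M_L - \rho^H_L = \tfrac12\sum_{i=k+1}^s\epsilon_i$, and reduces everything to evaluating $w^{-1}\epsilon(x_Q)$ for $\epsilon=\sum_{i=1}^s\epsilon_i$, which it does by citing Lemma 39 of \cite{BKISO} (namely $\langle w\epsilon,\epsilon\rangle = s-2|I^M_w>s|$) together with Lemma 19 of \cite{BKISOAXV}. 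You instead decompose $R^+_M$ and $R^+_H$ into the shared roots and the long/short roots $2\epsilon_i$ versus $\epsilon_i$, note that the shared contributions to $\chi^M_w$ and $\chi^H_w$ cancel (same Levi membership, same positivity after applying the signed permutation $w$), and are left with $\sum_{i\in S,\,i\le k}\epsilon_i$, which pairs against $x_Q=\sum_{i=1}^k\epsilon_i^*$ to give $|S\cap\{1,\ldots,k\}| = |I^M_w\le s|$ via the dictionary $I^M_w=\{w(1),\ldots,w(k)\}$. What your approach buys is self-containedness and transparency of the combinatorics: the two cited lemmas are replaced by a direct root-by-root cancellation, at the modest cost of invoking explicitly the correspondence between $w\in W^Q_M$ and $I^M_w$ --- but this is the same convention (from \cite{BKT09}, \cite{BKISO}) already used in Proposition \ref{CodimProp2}, so nothing new is assumed. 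Your closing caveat about normalization is also well taken: the paper's proof takes $x_Q=\sum_{i=1}^k\epsilon_i^*$, which when $k=s$ is the coweight dual to the short simple root $\epsilon_s$ of $H$ rather than to the long root $2\epsilon_s$ of $M$; with the $M$-normalization the right-hand side would pick up a factor of $\tfrac12$ in that case, which is harmless for the Levi-movability criterion (it only tests vanishing) but deserves the explicit mention you give it.
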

\begin{proof}
Let $\rho^M$ be one-half the sum of the positive roots of $M$, and $\rho^M_L$ be one-half the sum of the positive roots
of $Q$.  Then it is a fact (see section 3 of \cite{BK06}) that 
\[
\chi_w^M = \rho^M - 2\rho^M_L + w^{-1}\rho^M.
\]
Let $\rho^\oddO$ and $\rho^\oddO_L$ be one-half the sum of the positive roots of $\oddO$ and $Q^\oddO$, respectively.
Then clearly $\rho^M - \rho^\oddO = \frac{1}{2}\sum_{i=1}^s \epsilon_i$ and 
$\rho^M_L - \rho^\oddO_L = \frac{1}{2}\sum_{i=k+1}^s \epsilon_i$.  Then writing $\epsilon = \sum_{i=1}^s \epsilon_i$
and $\epsilon^L = \sum_{i=1}^k \epsilon_i - \sum_{i=k+1}^s \epsilon_i$, we see that
\[
\chi_w^M - \chi_w^\oddO = \frac{1}{2}(\epsilon^L + w^{-1}\epsilon).
\]
It is easy to see that $x_Q = \sum_{i=1}^k \epsilon^*_i$, where $\epsilon_1^*, \ldots, \epsilon_r^*$
is the dual basis, so that $\epsilon^L(x_Q) = k$.  Therefore it is
sufficient to show that $w^{-1}\epsilon(x_Q) = k-2|I^M_w > s|$.  Now Lemma 39 of \cite{BKISO} implies that 
$\langle w\epsilon, \epsilon \rangle = s-2|I^M_w > s|$, so then
\begin{align*}
w^{-1}\epsilon(x_Q) &= \langle w^{-1} \epsilon, \epsilon \rangle - \langle w^{-1} \epsilon, \sum_{i=k+1}^s \epsilon_i \rangle \\
            &= s-2|I^M_w > s| - \langle \epsilon, w \sum_{i=k+1}^s \epsilon_i \rangle.
\end{align*}
Now it is easy to see that $w \sum_{i=k+1}^s \epsilon_i = \sum_{i=1}^{s-k} \epsilon_{j_i}$ for some
$1\leq j_1 < j_2 < \cdots < j_{s-k} \leq s$ (see Lemma 19 of \cite{BKISOAXV}), and
therefore that $\langle \epsilon, w \sum_{i=k+1}^s \epsilon_i \rangle = s-k$, finishing the proof.
\end{proof}

\subsection{Expected dimension of intersections of Schubert varieties in $G/P$}

Our goal is to understand the relationship between Levi-movable products $\sigma_{w_1}^M \cdots \sigma_{w_n}^M = 1\cdot\text{[pt]}$ 
in $\text{H}^*(M/Q)$ and the associated product $\sigma_{w_1} \cdots \sigma_{w_n}$ in $\text{H}^*(G/P)$.  In this section we 
prove that the product over $G/P$ has degree zero, or equivalently that the expected dimension of the corresponding
intersection of Schubert varieties is zero.
We need to define some notation.
\begin{defin}
For any tuple of minimum length representatives $\vec{w} = (w_1, \ldots, w_n)$ we write $\theta(\vec{w}) = (\chi_1 - \sum_{i=1}^n \chi_{w_i})(x_P)$, 
$\theta^M(\vec{w}) = (\chi^M_1 - \sum_{i=1}^n \chi^M_{w_i})(x_Q)$,
and $\theta^H(\vec{w}) = (\chi^H_1 - \sum_{i=1}^n \chi^H_{w_i})(x_Q)$.  A (non-zero) cohomology product corresponding to
$\vec{w}$ is Levi-movable if the appropriate number $\theta(\vec{w})$, $\theta^M(\vec{w})$, or $\theta^H(\vec{w})$
is zero.
For the \emph{expected dimension} of $\bigcap_i C_{w_i}$ in $G/P$ we write
\[
\expDim{\vec{w}}{G} = \text{dim}(G/P) - \sum_{i=1}^n \text{codim}(C_{w_i}).
\]
The expected dimension of $\bigcap_i C_{w_i}^M$ is written $\expDim{\vec{w}}{M}$.
\end{defin}

The following lemmas relate $\theta(\vec{w})$, $\theta^M(\vec{w})$, and $\theta^H(\vec{w})$ to the
difference in expected dimension of intersections in $M/Q$ and $G/P$.

\begin{lemma}
For any $\vec{w} = (w_1, \ldots, w_n)$ where $w_i \in W^Q_M$ for each $i$, we have
\[
\theta(\vec{w}) - \theta^M(\vec{w}) = \expDim{\vec{w}}{G} -\expDim{\vec{w}}{M}.
\]
\end{lemma}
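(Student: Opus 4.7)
The plan is to expand the left-hand side using the definitions of $\theta$ and $\theta^M$ and then recognize the resulting expression as a termwise application of Proposition \ref{ChiProp}, evaluated at each $w_i$ and at the identity element $1 \in W^Q_M$.

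To set things up, I would first note that under the identification of the Cartan subalgebras of $M$ and $G$, together with the explicit comparison of simple roots given earlier in the section, the fundamental coweights $x_P$ of $G$ and $x_Q$ of $M$ coincide (at least in the primary case $k < s$, where the simple root $\alpha_k$ of $G$ is also a simple root of $M$). This aligns the two evaluation maps used in $\theta$ and $\theta^M$ and lets me write
\[
\theta(\vec{w}) - \theta^M(\vec{w}) = (\chi_1 - \chi^M_1)(x_P) - \sum_{i=1}^n (\chi_{w_i} - \chi^M_{w_i})(x_P).
\]

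Next, I would apply Proposition \ref{ChiProp} to each $w_i \in W^Q_M$, rewriting $(\chi_{w_i} - \chi^M_{w_i})(x_P) = \normtext{codim}(C_{w_i}) - \normtext{codim}(C^M_{w_i})$. Applying the same proposition at $w = 1$ gives $(\chi_1 - \chi^M_1)(x_P) = \normtext{codim}(C_1) - \normtext{codim}(C^M_1) = \dim(G/P) - \dim(M/Q)$, since $C_1$ and $C_1^M$ are the zero-dimensional (point) Schubert cells in $G/P$ and $M/Q$. Regrouping the resulting terms yields
\[
\theta(\vec{w}) - \theta^M(\vec{w}) = \Bigl[\dim(G/P) - \sum_{i=1}^n \normtext{codim}(C_{w_i})\Bigr] - \Bigl[\dim(M/Q) - \sum_{i=1}^n \normtext{codim}(C^M_{w_i})\Bigr],
\]
which is precisely $\expDim{\vec{w}}{G} - \expDim{\vec{w}}{M}$.

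In this sense the lemma is a formal bookkeeping consequence of Proposition \ref{ChiProp} and presents no substantive obstacle; the only subtle point is confirming the identification $x_P = x_Q$, which is immediate from the explicit listing of the simple roots of $M$ in terms of those of $G$ given at the start of the section.
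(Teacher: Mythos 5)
Your proof is correct and is essentially the paper's own argument: apply Proposition \ref{ChiProp} to $w=1$ (whose cell is a point, giving $\dim(G/P)-\dim(M/Q)$) and to each $w_i$, then regroup. As for your hedge about $k<s$: the paper's convention (see the proof of Proposition \ref{ChiHProp}) is $x_Q=\sum_{i=1}^k\epsilon_i^*$, which coincides with $x_P$ for every $k\leq s$, so no separate case is needed; and even with the strict fundamental-coweight normalization for $k=s$ (a factor $\tfrac12$), the discrepancy is a positive rescaling of $\theta^M(\vec{w})$, which vanishes in the only place the lemma is used (Proposition \ref{ExpDimProp}, where $\theta^M(\vec{w})=0$).
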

\begin{proof}
By Proposition \ref{ChiProp}, and we have
\begin{align*}
\theta(\vec{w}) - \theta^M(\vec{w}) & = (\chi_1 - \chi_1^M)(x_P) - \sum_{i=1}^n (\chi_{w_i} - \chi_{w_i}^M)(x_P) \\
                                    & = \text{dim}(G/P) - \text{dim}(M/Q) - \sum_{i=1}^n \text{codim}(C_{w_i}) - \text{codim}(C_{w_i}^M) \\
                                    & = \expDim{\vec{w}}{G} -\expDim{\vec{w}}{M}.
\end{align*}
\end{proof}

\begin{lemma}
For any $\vec{w} = (w_1, \ldots, w_n)$ where $w_i \in W^Q_M$ for each $i$, we have
\[
\theta^M(\vec{w}) - \theta^H(\vec{w}) = \frac{1}{2(r-s)}(\expDim{\vec{w}}{G} -\expDim{\vec{w}}{M}).
\]
\end{lemma}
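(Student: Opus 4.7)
The strategy is parallel to that of the preceding lemma: expand both sides using Proposition~\ref{ChiHProp}, and then use Proposition~\ref{CodimProp2} to convert the combinatorial counts $|I^M_w \leq s|$ into differences of codimensions. First I would expand
\[
\theta^M(\vec{w}) - \theta^H(\vec{w}) = (\chi^M_1 - \chi^H_1)(x_Q) - \sum_{i=1}^n (\chi^M_{w_i} - \chi^H_{w_i})(x_Q),
\]
and then apply Proposition~\ref{ChiHProp} to each difference on the right, rewriting the above as $|I^M_1 \leq s| - \sum_{i=1}^n |I^M_{w_i} \leq s|$.

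Next I would handle the two kinds of terms separately. For each $w_i$, Proposition~\ref{CodimProp2} gives $2(r-s)|I^M_{w_i}\leq s| = \normtext{codim}(C_{w_i}) - \normtext{codim}(C_{w_i}^M)$. For the identity term, $C_1$ is the class of a point in both $G/P$ and $M/Q$, so $I^M_1 = \{1,2,\ldots,k\}$, and using the standing assumption $k \leq s$ we get $|I^M_1 \leq s| = k$. This agrees with $\frac{1}{2(r-s)}\bigl(\normtext{dim}(G/P) - \normtext{dim}(M/Q)\bigr)$ via the identity $\normtext{dim}(G/P) - \normtext{dim}(M/Q) = 2k(r-s)$ recorded in the proof of Proposition~\ref{CodimProp2}. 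Substituting these two computations back, multiplying through by $2(r-s)$, and recognizing the result as $\expDim{\vec{w}}{G} - \expDim{\vec{w}}{M}$ yields the claimed equality.

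I do not expect any substantive obstacle here: this lemma is a combinatorial bookkeeping step that repackages the contents of Propositions~\ref{ChiHProp} and~\ref{CodimProp2} into the language of expected dimensions, with the identity Weyl element contributing the ambient dimension correction that turns the sum of codimension differences into the difference of expected dimensions. The only point worth being careful about is the identity term, since Proposition~\ref{ChiHProp} as stated applies to all $w \in W_M^Q$ and in particular to $w=1$, so no separate argument is really required.
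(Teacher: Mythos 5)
Your proposal is correct and follows essentially the same route as the paper: expand $\theta^M(\vec{w})-\theta^H(\vec{w})$ via Proposition~\ref{ChiHProp} into $|I_1^M\leq s|-\sum_i |I_{w_i}^M\leq s|$, convert these counts into codimension (and, for the identity, dimension) differences via Proposition~\ref{CodimProp2} and the identity $\normtext{dim}(G/P)-\normtext{dim}(M/Q)=2k(r-s)$, and recognize the result as $\frac{1}{2(r-s)}(\expDim{\vec{w}}{G}-\expDim{\vec{w}}{M})$. Your explicit treatment of the identity term ($I_1^M=\{1,\dots,k\}$, so $|I_1^M\leq s|=k$ since $k\leq s$) is just the $w=1$ case of Proposition~\ref{CodimProp2}, which the paper applies uniformly, so there is no substantive difference.
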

\begin{proof}
By Propositions \ref{ChiHProp} and \ref{CodimProp2}, we have
\begin{align*}
\theta^M(\vec{w}) - \theta^H(\vec{w}) & = (\chi^M_1 - \chi_1^H)(x_Q) - \sum_{i=1}^n (\chi^M_{w_i} - \chi_{w_i}^H)(x_Q) \\
                                    & = |I_1^M \leq s| - \sum_{i=1}^n |I_{w_i}^M \leq s| \\
                                    & = \frac{1}{2(r-s)}\left(\text{dim}(G/P) - \text{dim}(M/Q) - \sum_{i=1}^n \text{codim}(C_{w_i}) - \text{codim}(C_{w_i}^M)\right) \\
                                    & = \frac{1}{2(r-s)}(\expDim{\vec{w}}{G} -\expDim{\vec{w}}{M}).
\end{align*}
\end{proof}

The following is the main proposition of the section.

\begin{prop}\label{ExpDimProp}
If $\sigma_{w_1}^M \cdots \sigma_{w_n}^M = 1[pt]$ is Levi-movable, then $\expDim{\vec{w}}{G} = \theta(\vec{w}) = 0$.
\end{prop}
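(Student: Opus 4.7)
The plan is to exploit the type B/C isomorphism of eigencones (Proposition \ref{BCProdDual}) to bring the intermediate group $H = \text{SO}(2s+1) \times \text{Sp}(2(r-s))$ into play, and then combine the two lemmas linking $\theta(\vec{w})$, $\theta^M(\vec{w})$, $\theta^H(\vec{w})$ with the differences in expected dimension.

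First I would unpack what the hypothesis gives us directly. Because $\sigma_{w_1}^M \cdots \sigma_{w_n}^M = 1[\text{pt}]$, the product has top degree in $\normtext{H}^*(M/Q)$, so $\expDim{\vec{w}}{M} = 0$. Levi-movability of this product is precisely the statement $\theta^M(\vec{w}) = 0$ by the Belkale--Kumar characterization.

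Next, since $P$ was chosen so that the minimal representatives $w_i \in W_M^Q$ act trivially on the second factor of $M$, the cohomology product really lives in $\normtext{H}^*(M_1/Q_1) = \normtext{H}^*(\normtext{IG}(k,2s))$, a type C isotropic Grassmannian. Proposition \ref{BCProdDual} applied to the first factor then yields that the transported product over $H_1/Q_1^H = \normtext{OG}(k, 2s+1)$ is again $1[\text{pt}]$ and Levi-movable. In particular $\theta^H(\vec{w}) = 0$.

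Finally I would combine the two lemmas preceding the proposition. They give
\begin{align*}
\theta(\vec{w}) - \theta^M(\vec{w}) &= \expDim{\vec{w}}{G} - \expDim{\vec{w}}{M}, \\
\theta^M(\vec{w}) - \theta^H(\vec{w}) &= \tfrac{1}{2(r-s)}\bigl(\expDim{\vec{w}}{G} - \expDim{\vec{w}}{M}\bigr),
\end{align*}
so that $\theta(\vec{w}) - \theta^M(\vec{w}) = 2(r-s)\bigl(\theta^M(\vec{w}) - \theta^H(\vec{w})\bigr)$. Substituting $\theta^M(\vec{w}) = 0$ and $\theta^H(\vec{w}) = 0$ forces $\theta(\vec{w}) = 0$. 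Feeding this back into the first equation, together with $\expDim{\vec{w}}{M} = 0$, yields $\expDim{\vec{w}}{G} = 0$, which is the conclusion.

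There is no real obstacle once the B/C isomorphism is invoked; the proof is essentially bookkeeping with the two linear identities. The only subtle point to verify carefully is that Proposition \ref{BCProdDual} legitimately applies to the first factor of $M$ in isolation, i.e.\ that the parabolic $Q$ and the Weyl element $w_i$ factor through $M_1$ so that the isomorphism of type B and C cohomology rings transports the given Levi-movable point product to a Levi-movable point product on the $H_1$ side. This follows from the choice of $P$ and the fact that $W_M^Q$ has trivial second-factor component under our assumption $k \leq s$.
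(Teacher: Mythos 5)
Your proposal is correct and follows essentially the same route as the paper's proof: extract $\theta^M(\vec{w})=\expDim{\vec{w}}{M}=0$ from the hypothesis, use Proposition \ref{BCProdDual} (applied through the first factor, since $M/Q \cong \normtext{IG}(k,2s)$) to get $\theta^H(\vec{w})=0$, and then conclude via the two lemmas relating $\theta$, $\theta^M$, $\theta^H$ to the difference in expected dimensions. The only difference is cosmetic bookkeeping in how the two linear identities are combined.
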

\begin{proof}
By hypothesis, $\theta^M(\vec{w}) = \expDim{\vec{w}}{M} = 0$.  Therefore it suffices by the above lemmas
to show that $\theta^H(\vec{w}) = 0$.  By Proposition \ref{BCProdDual} the product $\sigma_{w_1}^H \cdots \sigma_{w_n}^H$
is non-zero and Levi-movable, so the proposition follows.
\end{proof}

\section{Properness of cohomology products in $G/P$ and proof of the eigencone theorems}\label{ProperSec}

In this section we assume that for Weyl group elements $w_i \in W^Q_M$, we have 
$\sigma_{w_1}^M \cdots \sigma_{w_n}^M = 1[pt]$, and furthermore this product is Levi-movable.  We furthermore
assume that $s = r-1$.  This case is sufficient to prove the eigencone result.  The main
theorem in this section is that the corresponding cohomology product in $\text{H}^*(G/P)$ is equal
to a positive multiple of the class of a point (Theorem \ref{CohomThm}).

The idea behind the proof is as follows.  We know from Proposition \ref{ExpDimProp} that
if $\sigma_{w_1} \cdots \sigma_{w_n} \neq 0$ then the theorem follows.  By \cite[Prop. 7.1 and Sec. 12.2]{FULINT}
and Kleiman transversality (see \cite[Prop. 1.1]{B06}) it follows that
if an intersection of Schubert varieties is non-empty and proper for some choice of flags -- that is, the
intersection is the expected dimension -- then the corresponding cohomology product is non-zero.  Our strategy is 
to move the Schubert varieties in $G/P$ using $M$ in order to make the intersection proper.  

If $M$ acted transitively on $G/P$ this would be trivial, and the intersection would be non-zero
because the corresponding intersection in $M/Q$ is non-zero.  While $M$ does not act transitively
on $G/P$, it does act with a finite number of orbits. This allows us to check the properness
of the intersection orbit-by-orbit and conclude that if the intersection is the expected dimension
in each orbit, then the intersection is proper.  Since $M$ acts transitively on each orbit,
checking the dimension of the intersection within each orbit amounts to computing the dimension of 
each Schubert variety within each orbit.  This is the strategy which was used successfully in 
\cite{BKISOAXV} to prove similar cohomology results for $\text{SO}(2r+1)$ and $\text{Sp}(2r)$ in
$\text{SL}(2r+1)$ and $\text{SL}(2r)$, respectively.  Similar methods were also used in \cite{BKT09}
to prove quantum Pieri rules for isotropic Grassmannians.

Following this strategy leads to the conclusion that some intersections will always be above the
expected dimension if we only move the Schubert varieties using $M$.  However, by first
shifting one of the varieties by a particular element of $G$, and then 
allowing shifting by general elements of $M$, these intersections can be made proper and non-empty.

\subsection{Action of $M$ on $G/P$}

First we give a description of the action of $M$ on $G/P$, including the orbits.  We will
see that this action has four orbits, which we denote  $\mathcal{O}_1$, $\mathcal{O}_2$, $\mathcal{O}_2'$, 
and $\mathcal{O}_3$.

Let $V=\mathbb{C}^{2r}$.  As in section \ref{PrelimSection}, we fix an ordered basis $e_1, e_2, \ldots, e_r, e_r', e_{r-1}', \ldots, e_1'$,
and let $(,)$ be the non-degenerate symplectic form satisfying
\begin{enumerate}
\item $(e_i, e_j') = \delta_{ij}$
\item $(e_i, e_j) = 0$
\item $(e_i', e_j') = 0$.
\end{enumerate}
Let $E_\bullet$ be the full flag of $V$ corresponding to this ordered basis.  We will identify $\overline{e} \in G/P$
with the isotropic subspace $W_0 = \text{span}(e_1, e_2, \ldots, e_{k})$.

Let $V_1 = \text{span}(e_1, e_2, \ldots, e_{r-1}, e_{r-1}', \ldots, e_1')$, and $V_2 = \text{span}(e_r, e_r')$.
Then $M$ is the subgroup of $G$ fixing $V_1$ and $V_2$.  Clearly then $M/Q \cong \text{IG}(k,V_1)$, where
$V_1$ is equipped with the form $(,)$ restricted from $V$.  The inclusion $M/Q \subseteq G/P$ is
simply given by the inclusion $V_1 \subseteq V$.  We can describe the orbits of $M$ in terms of the dimensions
of these spaces: 

\begin{defin}
For any isotropic subspace $W\subseteq V$ of dimension $k$, let $W_1$ be the orthogonal projection of $W$
to $V_1$, and $W_2$ the projection to $V_2$.  Furthermore:
\begin{enumerate}
\item Let $\mathcal{O}_1$ be the subset of $G/P$ such that $\text{dim}(W_2) = 0$;
\item Let $\mathcal{O}_2$ be the subset of $G/P$ such that $\text{dim}(W_2) = 1$ and $\text{dim}(W_1) = k$; 
\item Let $\mathcal{O}_2'$ be the subset of $G/P$ such that $\text{dim}(W_2) = 1$ and $\text{dim}(W_1) = k-1$; 
\item Let $\mathcal{O}_3$ be the subset of $G/P$ such that $\text{dim}(W_2) = 2$.
\end{enumerate}
\end{defin}
It is immediate that these sets are disjoint and cover all of $G/P$.  Furthermore, since the action of $M$
commutes with the projections $\text{pr}_1: V \rightarrow V_1$ and $\text{pr}_2: V \rightarrow V_2$,
$M$ preserves these subsets.  It remains to show that $M$ acts transitively on these subsets.

\begin{prop}
The sets $\mathcal{O}_1$, $\mathcal{O}_2$, $\mathcal{O}_2'$, and $\mathcal{O}_3$ correspond
to the closed points of the orbits of the action of $M$ on $G/P$.  Furthermore, we have:
\begin{itemize}
\item $\normtext{dim}(\mathcal{O}_1) = \normtext{dim}(\normtext{IG}(k,2(r-1))) = \frac{k}{2}(4(r-1)-3k+1)$
\item $\normtext{dim}(\mathcal{O}_2) = \normtext{dim}(\normtext{IG}(k-1,2(r-1))) +  \normtext{dim}(\normtext{Gr}(1,2)) + \normtext{dim}(\normtext{Gr}(k,k+1))$
\item $\normtext{dim}(\mathcal{O}_2') = \normtext{dim}(\normtext{IG}(k-1,2(r-1))) +  \normtext{dim}(\normtext{Gr}(1,2))$
\item $\normtext{dim}(\mathcal{O}_3) = \normtext{dim}(\normtext{IG}(k,2r)) = \frac{k}{2}(4r-3k+1)$
\end{itemize}
\end{prop}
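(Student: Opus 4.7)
The plan is to prove that each $\mathcal{O}_i$ is a single $M$-orbit by reducing to Witt's extension theorem, and then to compute the dimensions via an explicit parameterization in terms of isotropic Grassmannians in $V_1$ and Grassmannians in $V_2 \cong \mathbb{C}^2$.

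\textbf{Transitivity.} Since $M = \text{Sp}(V_1) \times \text{Sp}(V_2)$ preserves the decomposition $V = V_1 \oplus V_2$, the projections $W_1 = \text{pr}_1(W)$, $W_2 = \text{pr}_2(W)$ and the intersections $W \cap V_1$, $W \cap V_2$ are all $M$-equivariant, so the discrete invariants defining the four subsets are constant on each $M$-orbit. For the converse, given two elements $W, W'$ in the same $\mathcal{O}_i$, I would choose adapted symplectic (Witt) bases of $V_1$ and $V_2$ putting both into a common normal form and then apply Witt's extension theorem on each factor to produce $(g_1, g_2) \in \text{Sp}(V_1) \times \text{Sp}(V_2)$ carrying $W$ to $W'$. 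For $\mathcal{O}_1$ this reduces to the transitive action of $\text{Sp}(V_1)$ on $\text{IG}(k, V_1)$; for $\mathcal{O}_2'$, where $W = (W \cap V_1) \oplus (W \cap V_2)$ splits along $V_1 \oplus V_2$, the argument factors into its two components. For $\mathcal{O}_2$ and $\mathcal{O}_3$ one additionally must move the ``gluing'' data that mixes $V_1$ and $V_2$ inside $W$; this is handled by Witt's theorem applied to the symplectic space $U^\perp/U$ associated to $U = W \cap V_1$.

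\textbf{Parameterizations and dimensions.} For $\mathcal{O}_1$, the inclusion $W \hookrightarrow V_1$ identifies $\mathcal{O}_1$ with $\text{IG}(k, V_1)$, yielding the stated dimension. For $\mathcal{O}_2'$, the map $W \mapsto (W \cap V_1, W \cap V_2)$ is a bijection onto $\text{IG}(k-1, V_1) \times \text{Gr}(1, V_2)$. For $\mathcal{O}_2$, I would use the parameterization $W \leftrightarrow (W_1, L, W)$ where $W_1 = \text{pr}_1(W) \in \text{IG}(k, V_1)$ (isotropic since $W$ is isotropic and $W_1 \perp V_2$), $L = W_2 \in \text{Gr}(1, V_2)$, and $W$ is recovered as a hyperplane in the $(k+1)$-dim space $W_1 \oplus L$ not containing $L$; this last datum varies in an open subset of $\text{Gr}(k, k+1)$, giving the stated formula. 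For $\mathcal{O}_3$, where $W_2 = V_2$ forces $W \cap V_2 = 0$ and $\dim(W \cap V_1) = k-2$, I would parameterize by $U = W \cap V_1 \in \text{IG}(k-2, V_1)$ together with a 2-dim lift of $V_2$ into $(U^\perp_{V_1}/U) \oplus V_2$ whose induced symplectic pairing matches the pairing on $V_2$; a direct count then reproduces $\dim \text{IG}(k, 2r)$, confirming that $\mathcal{O}_3$ is open dense in $G/P$.

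\textbf{Main obstacle.} The most delicate case is $\mathcal{O}_3$. For a basis $(y_1 + e_r, y_1' + e_r')$ of $W/U$ with $y_1, y_1' \in U^\perp/U$, isotropy of $W$ reduces to the single pairing condition $(y_1, y_1') = -(e_r, e_r')$, and one must verify that this one linear constraint inside $(U^\perp/U)^2$, added to $\dim \text{IG}(k-2, V_1)$, telescopes precisely to $\dim \text{IG}(k, 2r)$. Once this is checked, transitivity inside $\mathcal{O}_3$ follows from the transitive action of $\text{Sp}(U^\perp/U)$ on pairs in $(U^\perp/U)^2$ with a prescribed nonzero symplectic pairing (Witt again), completing the proof.
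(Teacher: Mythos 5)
Your proposal is correct and follows essentially the same route as the paper: stratify by the $M$-equivariant invariants $(\dim W_1,\dim W_2)$, prove transitivity by extending adapted bases to symplectic bases (Witt's theorem), and compute dimensions via the projections to $\normtext{IG}(k,V_1)$, $\normtext{IG}(k-1,V_1)\times\normtext{Gr}(1,2)$, and fibers that are open subsets of $\normtext{Gr}(k,k+1)$. The only real divergence is $\mathcal{O}_3$, where your ``delicate'' parameter count through $U=W\cap V_1$ and the graph condition $(y_1,y_1')=-(e_r,e_r')$ does telescope correctly, but the paper sidesteps it entirely by noting that $\dim W_2=2$ is an open condition, so $\mathcal{O}_3$ is open in $G/P$ and automatically has dimension $\frac{k}{2}(4r-3k+1)$.
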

\begin{proof}
Let $W$ be a $k$-dimensional isotropic subspace of $V$, and let $W_i^{int} = W \cap V_i$.  
Note that $W_1^{int}$ is exactly the kernel of the projection $W \rightarrow W_2$.
Clearly the action of $M$ is transitive on $\mathcal{O}_1$, since it is simply the image of
$M/Q$ in $G/P$.  If we suppose that $\text{dim}(W_2) = 1$ and $\text{dim}(W_1) = k-1$  then $W_i^{int} = W_i$,
so this orbit is clearly isomorphic to $\text{IG}(k-1,V_1) \times \text{Gr}(1,2)$, and 
$M$ acts on each factor in the obvious way; in particular this action is transitive.

Suppose now that $\text{dim}(W_2) = 1$ and $\text{dim}(W_1) = k$.  Then $\text{dim}(W_1^{int}) = k-1$,
so let $w_1, \ldots, w_{k-1}$ be a basis for $W_1^{int}$
and let $w_k \in W$ be a vector that completes this basis to a basis of $W$.  Then we can
write $w_k = w_k^1 + w_k^2$, where $w_k^1 \in W_1$ and $w_k^2 \in W_2$, and neither of these
vectors are zero.  First we claim that $W_1$ is an isotropic subspace of $V_1$.  It is clearly
sufficient to show that $(w_k^1, w_i) = 0$ for all $i < k$.  Since $W$ is isotropic, $(w_k, w_i) = 0$,
and since $W_1^{int} \subseteq V_1$, clearly $(w_k^1, w_i) = 0$ for all $i < k$, so
$W_1$ is isotropic.  Now suppose we have another isotropic subspace $W'$ in this orbit, with
a basis $w_1', \ldots, w_k'$ constructed as above.  Then there is an element $m_1$ of $\text{Sp}(V_1)$
that maps the basis $w_1, \ldots, w_{k-1}, w_{k}^1$ of $W_1$ to the basis $w_1', \ldots, w_{k-1}', (w_{k}^1)'$
of $W_1'$, and an element $m_2$ of $\text{SL}(2)$ mapping $w_k^2$ to $(w_k^2)'$.
(One can extend these bases to symplectic bases of $V_1$ and $V_2$; the change of basis maps
are then symplectic, giving $m_1$ and $m_2$.)  Together
these give an element $(m_1,m_2)$ of $M$ mapping $W$ to $W'$.  Therefore $M$ acts transitively
on $\mathcal{O}_2$.  Now there is a natural surjective morphism 
$\mathcal{O}_2 \rightarrow \text{IG}(k,V_1) \times \text{Gr}(1,2)$ induced by the projection
maps to $V_1$ and $V_2$.  Fixing the spaces $W_1$ and $W_2$, we can construct a subspace in the
fiber of this morphism by choosing an appropriate isotropic $k$-dimensional subspace of $W_1 + W_2$.  Any such subspace is 
automatically isotropic since $W_1 + W_2$ is isotropic, and so we simply need to choose a
subspace of $W_1 + W_2$ whose projection to $V_1$ is $W_1$, and whose projection to $V_2$ is $W_2$.
This is an open condition in $\text{Gr}(k, W_1 + W_2)$, and so the fibers of the morphism
$\mathcal{O}_2 \rightarrow \text{IG}(k,V_1) \times \text{Gr}(1,2)$ can all be identified with
open subsets of $\text{Gr}(k,k+1)$.

Finally, suppose that $\text{dim}(W_2) = 2$.  This is clearly an open condition, so that this
orbit will be open in $G/P$.  It remains to show that $M$ acts transitively.  Now $\text{dim}(W_1^{int}) = k-2$,
so let $w_1, \ldots, w_{k-2}$ be a basis of $W_1^{int}$, and let $w_{k-1}$ and $w_k$ complete this basis to
a basis of $W$.  First we claim that $\text{dim}(W_1) = k$.  Since $W$ is isotropic, 
\begin{align*}
0 &= (w_{k-1},w_k) \\
  &= (w_{k-1}^1, w_k^1) + (w_{k-1}^2, w_k^2)
\end{align*}
and since $(w_{k-1}^2, w_k^2) \neq 0$, we see that $w_{k-1}^1$ and  $w_k^1$ are non-zero and linearly
independent.  Suppose that $a_1, \ldots, a_k \in \mathbb{C}$ are numbers such that 
$w = a_1 w_1 + \cdots + a_{k-2} w_{k-2} + a_{k-1} w_{k-1}^1 + a_k w_k^1 = 0$.  For the sake of contradiction we 
can assume that one of $a_{k-1}$ and $a_k$ is non-zero; without loss of generality assume $a_{k-1} \neq 0$.  But then 
\begin{align*}
0 = (w,w_k) &= (a_{k-1} w_{k-1}^1 + a_k w_k^1, w_k) \\
            &= (a_{k-1} w_{k-1}^1, w_k^1) \neq 0
\end{align*}
which is a contradiction.  Therefore $a_i=0$ for all $i$ (since $w_1, \ldots, w_{k-2}$ are independent), and so 
the vectors $w_1, \ldots, w_{k-2}, w_{k-1}^1, w_k^1$ are linearly independent.  Now if we have another isotropic
subspace $W'$ in $\mathcal{O}_3$, as above by extending to symplectic bases (after scaling $w_k$ so that
$(w_{k-1}^1, w_k^1) = 1$) we can construct an element $m$ of $M$ 
identifying the chosen bases of $W_1$ and $W_2$ with the bases of $W_1'$ and $W_2'$.  This element will then map $W$ to $W'$, completing
the proof.
\end{proof}

\subsection{Proof of Theorem \ref{CohomThm}}

Let $I^M = \{i_1 < \cdots < i_k\}$ be a $k$-sized subset of $\{1, \ldots, 2(r-1)\}$ corresponding to a Schubert
variety in $M/Q$, and $I$ the corresponding subset of $\{1, \ldots, 2r\}$.  Assume that there is an $i \in I$ such that $i < r$.  Then 
we define a full flag $E_\bullet(I)$ of $V$ as follows.
\begin{itemize}
\item $E_0(I) = \{0\}$
\item $E_{i}(I) = E_{i-1}(I) + \text{span}(e_i)$ if $i \leq r$ and $i \neq i_1$
\item $E_{i}(I) = E_{i-1}(I) + \text{span}(e_i + e_r)$ if $i = i_1$ 
\item $E_i(I) = E_{2r-i}(I)^\perp$ if $i > r$
\end{itemize}
To finish the proof of Theorem \ref{CohomThm}, we need to calculate the dimension of the
intersections of $C_I(E_\bullet)$ and $C_I(E_\bullet(I))$ with each
orbit of $M$.

\begin{prop}\label{SchubOrbDim}
If there is an $i\in I$ such that $i > r+1$ then the dimensions of the intersections of $C_I(E_\bullet)$
with each orbit $\mathcal{O}_1$, $\mathcal{O}_2$, $\mathcal{O}_2'$,  and $\mathcal{O}_3$ are: 
\begin{enumerate}
\item $\normtext{dim}(C_I(E_\bullet) \cap \mathcal{O}_1) =  \normtext{dim}(C_I^M)$
\item $\normtext{dim}(C_I(E_\bullet) \cap \mathcal{O}_2) = \normtext{dim}(C_I^M) + |I^M > r-1| + 1$
\item $C_I(E_\bullet) \cap \mathcal{O}_2' = \emptyset$
\item If non-empty, $\normtext{dim}(C_I(E_\bullet) \cap \mathcal{O}_3)=  \normtext{dim}(C_I)$
\end{enumerate}
If there is furthermore an $i\in I$ such that $i<r$, then letting
$E_\bullet(I)$ be the flag constructed above, we have
\begin{enumerate}
\item $C_I(E_\bullet(I)) \cap \mathcal{O}_1 = \emptyset$
\item $\normtext{dim}(C_I(E_\bullet(I)) \cap \mathcal{O}_2) = \normtext{dim}(C_I^M) + |I^M > r-1|$
\item $C_I(E_\bullet(I)) \cap \mathcal{O}_2' = \emptyset$
\item If non-empty, $\normtext{dim}(C_I(E_\bullet(I)) \cap \mathcal{O}_3) =  \normtext{dim}(C_I)$
\end{enumerate}
\end{prop}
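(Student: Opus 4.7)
The plan is to work orbit-by-orbit, parametrizing $W$ in each of $\mathcal{O}_1, \mathcal{O}_2, \mathcal{O}_2', \mathcal{O}_3$ via the decomposition $V = V_1 \oplus V_2$, and translating the Schubert incidence conditions $\dim(W \cap E_p) = |I \leq p|$ into constraints on the components $W_1, W_2, W_1^{int} = W \cap V_1$. The structural fact driving every computation is that, since $I$ is obtained from $I^M$ by adding $2$ to each element above $r-1$, neither $r$ nor $r+1$ lies in $I$; in particular $|I \leq r-1| = |I \leq r| = |I \leq r+1|$, and this constancy pins down how $W$ is allowed to interact with the $V_2$-part of each flag piece.

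For $\mathcal{O}_1$ (where $W \subseteq V_1$) the restriction of $E_\bullet$ to $V_1$ is the standard flag of $V_1$, with levels $r$ and $r+1$ both collapsing to $E_{r-1}$; since $I$ avoids $r$ and $r+1$, the conditions on $W$ are precisely those defining $C_I^M \subseteq \text{IG}(k, V_1)$, giving $\dim C_I^M$. For $\mathcal{O}_3$ (which is open in $G/P$) the intersection $C_I(E_\bullet) \cap \mathcal{O}_3$ is open in the irreducible cell $C_I$, hence has dimension $\dim C_I$ whenever non-empty. For $\mathcal{O}_2'$, I write $W = W_1 \oplus W_2$ with $W_2 \subseteq V_2$ a line; since $V_2 \subseteq E_{r+1}$ while $V_2 \cap E_{r-1} = 0$, one has $\dim(W \cap E_{r+1}) - \dim(W \cap E_{r-1}) \geq \dim W_2 = 1$, which contradicts $|I \leq r+1| = |I \leq r-1|$, so this orbit contributes nothing.

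The substantive case is $\mathcal{O}_2$. Here I would parametrize $W$ by $(W_1^{int}, v)$ with $W_1^{int} \in \text{IG}(k-1, V_1)$ and $v = u + w$, where $w \in V_2 \setminus \{0\}$ and $u$ is a representative of a nonzero class in $(W_1^{int})^\perp / W_1^{int}$; isotropy reduces to $u \in (W_1^{int})^\perp$ because $V_1 \perp V_2$. The Schubert conditions at levels $p < r$ constrain $W_1^{int}$ to lie in a cell of $\text{IG}(k-1, V_1)$, and together with the dual incidence conditions coming from the top of the flag, these conditions recover a count of $\dim C_I^M$ parameters after accounting for the extra freedom in $u$; each Schubert jump at a position $i > r+1$ contributes one additional degree of freedom for the class of $u$ modulo $W_1^{int}$, giving the term $|I^M > r-1|$; and the free choice of $[w] \in \mathbb{P}(V_2)$ contributes the $+1$.

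For the modified flag $E_\bullet(I)$, the twist at level $i_1$ replaces $e_{i_1}$ by $e_{i_1} + e_r$. In $\mathcal{O}_1$, the requirement $W \subseteq V_1$ forces $W \cap E_{i_1}(I) = W \cap E_{i_1 - 1}$, whose dimension $0$ falls short of the required $|I \leq i_1| = 1$, so the intersection is empty. In $\mathcal{O}_2$, the twist at step $i_1$ imposes an additional matching condition between the $V_2$-part of $v$ and the direction $e_r$, cutting the count by exactly one. The arguments for $\mathcal{O}_2'$ and $\mathcal{O}_3$ carry over verbatim. The principal technical obstacle is the $\mathcal{O}_2$ dimension count: carefully partitioning the Schubert jumps of $I$ between constraints on $W_1^{int}$ and sliding choices for $u$ modulo $W_1^{int}$, and verifying that the isotropy condition $u \in (W_1^{int})^\perp$ does not interfere with this split, is the crux of the proof, both for the standard flag and after the twist by $e_r$.
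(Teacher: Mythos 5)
Your handling of $\mathcal{O}_1$, $\mathcal{O}_3$, and of $\mathcal{O}_2'$ for the standard flag agrees with the paper, as does your emptiness argument for $C_I(E_\bullet(I))\cap\mathcal{O}_1$. The genuine gap is the $\mathcal{O}_2$ count, which you rightly call the crux but then only assert. The paper computes it by fibering $C_I(E_\bullet)\cap\mathcal{O}_2$ over the \emph{projections}: the map $\mathcal{O}_2\rightarrow \text{IG}(k,V_1)\times\text{Gr}(1,2)$ sends the intersection into $C_I^M\times\text{Gr}(1,2)$, and the fiber over $(W_1,W_2)$ is identified with an open subset of a Schubert cell $C_J(F_\bullet)$ of $\text{Gr}(k,W_1+W_2)$ of dimension $|I^M>r-1|$, where $F_\bullet$ is the flag induced by $E_\bullet$; for $E_\bullet(I)$ the $\text{Gr}(1,2)$ factor collapses to the single line $\text{span}(e_r)$, giving the drop by exactly one. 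Your parametrization by $W\cap V_1\in\text{IG}(k-1,V_1)$ together with $u$ and $[w]$ does not split the Schubert conditions the way you claim: for $p\geq r+1$ one has $V_2\subseteq E_p$, and then $\dim(W\cap E_p)=\dim\bigl(W_1\cap E_p\cap V_1\bigr)$ with $W_1=(W\cap V_1)+\mathbb{C}u$ the projection, so the conditions at high levels constrain $W\cap V_1$ and $u$ \emph{jointly} through $W_1$, not "one extra degree of freedom in $u$ per jump at a position $>r+1$"; nor is it explained why your base, a locus of $(k-1)$-dimensional isotropic subspaces, contributes exactly $\dim C_I^M$ parameters. Since the entire properness argument in Theorem \ref{CohomThm} hinges on the precise values $\dim C_I^M+|I^M>r-1|+1$ versus $\dim C_I^M+|I^M>r-1|$, asserting this count leaves the proposition unproved.

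A second, smaller problem: your $\mathcal{O}_2'$ argument does not carry over verbatim to $E_\bullet(I)$. It uses $V_2\subseteq E_{r+1}$, but $e_r'\notin E_{r+1}(I)=E_{r-1}(I)^{\perp}$ because $(e_r',e_{i_1}+e_r)=-1\neq 0$. In fact there are $W\in\mathcal{O}_2'$ whose $V_2$-line has nonzero $e_r'$-component and which have no jump at positions $r$ or $r+1$ with respect to $E_\bullet(I)$ (e.g.\ for $r=3$, $k=2$, $i_1=1$, the space $W=\text{span}(e_2',e_3')$ jumps only at positions $5,6$). Emptiness still holds, but for a different reason: for such $W$ one checks $W\cap E_p(I)=(W\cap V_1)\cap\text{span}(e_j:j\leq p,\ j\neq i_1)$ for $p\leq r-1$, so the jump required at $i_1\in I$ cannot occur, while if the $V_2$-line is $\text{span}(e_r)$ one gets a forbidden jump at $r$. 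So this case needs its own short argument. Relatedly, your statement that the twist "cuts the count by exactly one" in $\mathcal{O}_2$ should be justified by showing that the vector of $W$ realizing the jump at $i_1$ has $V_2$-component a nonzero multiple of $e_r$, forcing $W_2=\text{span}(e_r)$ and eliminating precisely the one-dimensional $\text{Gr}(1,2)$ freedom.
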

\begin{proof}
The first equality follows from the fact that $C_I(E_\bullet) \cap \mathcal{O}_1 = C_I^M$.
The fact that $C_I(E_\bullet(I)) \cap \mathcal{O}_1 = \emptyset$ follows from the fact that
for any $W \in C_I(E_\bullet(I))$, there is a vector $w \in W$ such that 
$w = a_1 e_1 + \cdots + a_{i_1}( e_{i_1} + e_r)$ with $a_{i_1} \neq 0$, so that $W$ is not a subspace of $V_1$.
The empty intersections with $\mathcal{O}_2'$ follow from the fact that $W \in C_I(E_\bullet)$ or $W \in C_I(E_\bullet(I))$
cannot have a jump in dimension at position $r$ or $r+1$ of the corresponding flags, and any
$W \in \mathcal{O}_2'$ must have a jump in dimension at these positions.  The dimensions
of the intersections with $\mathcal{O}_3$ follow from the fact that $\mathcal{O}_3$ is an
open orbit.

It remains to calculate the dimensions in $\mathcal{O}_2$.  Since there is an $i\in I$ 
such that $i > r+1$ the intersection $C_I(E_\bullet) \cap \mathcal{O}_2$ is nonempty.  So let
$W \in C_I(E_\bullet) \cap \mathcal{O}_2$.  Clearly then $W_1$ intersects the 
standard full flag of $V_1$ according to $I^M$.  Therefore the image of 
$C_I(E_\bullet) \cap \mathcal{O}_2 \rightarrow \text{IG}(k, V_1)$ is contained in
$C_I^M$.  Now fix $W_1 \in C_I^M$ and $W_2 \in \text{Gr}(1,2)$.  We claim
that the fiber in $C_I(E_\bullet) \cap \mathcal{O}_2$ over $(W_1, W_2)$
is a $|I^M > r-1|$-dimensional subspace of $\text{Gr}(k, W_1+W_2)$.  Let $F_\bullet$
be the full flag of $W_1+W_2$ induced by $E_\bullet$.  Note that the subset $I'$
corresponding to the intersection of $W_1+W_2$ with $E_\bullet$ is the set $I^M$ with either $r$
or $r+1$ added.  Therefore, any $k$-dimensional subspace of $W_1+W_2$ that intersects
$F_\bullet$ according to $J = \{1,2,\ldots, |I^M \leq r-1|, |I^M \leq r-1| + 2, \ldots, k+1\}$
is in $C_I(E_\bullet)$; in fact $C_I(E_\bullet) \cap \text{Gr}(k, W_1+W_2) = C_J(F_\bullet)$,
which has dimension $|I^M > r-1|$.  
Then since the fiber in $\mathcal{O}_2$ over $(W_1,W_2)$ is open in $\text{Gr}(k, W_1+W_2)$, the fiber in
$C_I(E_\bullet) \cap \mathcal{O}_2$ over $(W_1, W_2)$ is open in $C_J(F_\bullet)$, and nonempty by
assumption.  Finally, it is easy to see that the fiber in $C_I(E_\bullet) \cap \mathcal{O}_2$ over any pair $(W_1, W_2)$ in 
$C_I^M \times \text{Gr}(1,2)$ is non-empty, and therefore by the above work is $|I^M > r-1|$-dimensional.
Therefore the dimension of $C_I(E_\bullet) \cap \mathcal{O}_2$ is 
$\normtext{dim}(C_I^M) + \text{dim}(\text{Gr}(1,2)) + |I^M > r-1|$.

The calculation of the dimension of $C_I(E_\bullet(I)) \cap \mathcal{O}_2$
is essentially identical, except that for any subspace $W$ in this intersection,
$W_2$ is always the line spanned by $e_r$, thus lowering the dimension by one.
\end{proof}

\begin{proof}[Proof of Theorem \ref{CohomThm}]
We follow the strategy outlined at the beginning of this section.  By Proposition \ref{ExpDimProp}
the expected dimension of the intersection is $0$, and if non-empty the intersection will
be Levi-movable.  Let
$I_1, \ldots, I_n$ be sets corresponding to the Schubert varieties.  There are two
cases.  First assume that there is a $j$ such
that $C_{I_j} \subseteq \mathcal{O}_1 = M/Q$.  Then the intersection
\[
\bigcap_j C_{I_j}(E_\bullet)
\]
is contained in $\mathcal{O}_1 = M/Q$, and by assumption we can make this intersection non-empty and
proper inside this orbit by shifting the Schubert varieties by $M$, proving the theorem in this case.

Now assume that none of the Schubert varieties are contained in $\mathcal{O}_1$.  Then
for every $j$, there is an $i \in I_j$ satisfying $i > r+1$.  Furthermore, there
exists a $j_0$ such that $I_{j_0}$ contains an integer $i$ satisfying $i < r$.  This 
follows from the calculation of the difference in codimension in Proposition \ref{CodimProp2}
and Proposition \ref{ExpDimProp}.  Without loss of generality, suppose $j_0 = 1$.
Consider the following intersection:
\[
C_{I_1}(E_\bullet(I_1)) \cap \bigcap_{j=2}^n C_{I_j}(E_\bullet)
\]
Then we claim that we can make the above intersection proper in each
orbit of $M$, and nonempty in $\mathcal{O}_2$, by shifting these Schubert varieties
independently by $M$.  Now by Proposition \ref{SchubOrbDim}, the intersection
is empty in $\mathcal{O}_1$ and $\mathcal{O}_2'$.  Clearly we can make the intersection
proper in the open orbit $\mathcal{O}_3$.

We claim that for generic $m_1, \ldots, m_n \in M$ the intersection
\[
X  = \mathcal{O}_2 \cap ( m_1C_{I_1}(E_\bullet(I_1)) \cap \bigcap_{j=2}^n m_jC_{I_j}(E_\bullet) )
\]
is exactly a point.  Recall the morphism $\mathcal{O}_2 \rightarrow \text{IG}(k, 2(r-1)) \times \text{Gr}(1,2)$.
As shown in the proof of Proposition \ref{SchubOrbDim}, for $j>1$, each 
$\mathcal{O}_2 \cap C_{I_j}(E_\bullet)$ is fibered over $C_{I_j}^M \times \text{Gr}(1,2)$
by open subsets of linear spaces in $\text{Gr}(k,k+1)$ of dimension $|I_j^M > r-1|$, and similarly
$\mathcal{O}_2 \cap C_{I_1}(E_\bullet(I_1))$ is fibered over $C_{I_1}^M $
by open subsets of linear spaces of dimension $|I_1^M > r-1|$.  Choosing generic $m_1, \ldots m_n$
we can make the intersection $\bigcap_j m_jC_{I_j}^M$ equal to a single point $x$ by
assumption.  Then since $M$ acts transitively on $\mathcal{O}_2$, it acts transitively
on the fiber over $x$, and therefore by shifting the fibers by generic elements of the
stabilizer of $x$ we see that the intersection $X$ is isomorphic to a non-empty open subset
of a linear space of dimension 
\begin{align*}
k - \sum_j (k - |I_j^M > r-1|) &= k - \sum_j |I_j^M \leq r-1| \\
                               &= \frac{1}{2}(2k - \sum_j \text{codim}(C_{I_j}) - \text{codim}(C_{I_j}^M)) =0,
\end{align*}
where the last two equalities follow by Propositions \ref{CodimProp2} and \ref{ExpDimProp}. Therefore, $X$ is a single point, finishing the proof.
\end{proof}

\subsection{Proof of the eigencone theorems}

In this section we use the cohomology result (Theorem \ref{CohomThm}) proven in the previous section
to prove the eigencone results stated in the introduction.  The proof also relies on the theorem of
Belkale and Kumar reducing the inequalities determining the eigencone to those corresponding
to Levi-movable products (Theorem \ref{EigIneqThm}).  Parts (3) and (5) also depend on the
work of Braley and Lee \cite{BRTHS,LEETHS}.

\begin{proof}[Proof of Theorem \ref{MainThm}]
(2)  The inclusion $M \subseteq G$  induces a map of eigencones $\econe{n}{M} \rightarrow \econe{n}{G}$.
When restricted to the first factor $M_1 \cong \text{Sp}(2(r-1))$ the map is an isometric embedding 
by the description of the fundamental weights of 
$M$ and $G$ in section \ref{PrelimSection}.  Furthermore note that the image of the
dominant chamber of $M_1$ is identified with the subcone of the dominant chamber of $G$ where the
coefficient of $\omega_r$ is zero.  Therefore by Theorem \ref{CohomThm} and Corollary
\ref{SubconeCor}, the proof is finished.

(1) This follows from the isomorphism between the eigencones of groups of types B and C (see section \ref{EigenconeSec}).
Note that $\omega_r^C = \omega_r^B$ for $i<r$ $\omega_r^C = 2\omega_r^B$.

(3)  This follows from part (1) and the main theorem in \cite{BRTHS}, which identifies the subcone of 
$\econe{n}{D_r}$ in which the coefficients of $\omega_{r-1}$ and $\omega_r$ are the same
with $\econe{n}{B_{r-1}}$.

(4)  We follow the same strategy as in part (2). In this case $M \cong \text{SL}(2) \times \text{SL}(2)$, and
$M \subseteq G$ corresponds to the sub-root-system of $G$ with positive roots $\beta = 3 \alpha_1 + 2 \alpha_2$
and $\beta' = \alpha_1$.  The factor corresponding to $\beta$ will be labeled $M_1$.  It is easy to check
that the fundamental weights are $\nu = \frac{1}{2}\omega_2$ and $\nu' = \omega_1 -  \frac{1}{2}\omega_2$,
so that $a\omega_1 + b\omega_2$ equals $(a+2b)\nu + a\nu'$. 
Let $P$ be the parabolic subgroup of $G$ which excludes $\alpha_2$, and $Q$ be the parabolic subgroup of $M$ 
which excludes $\beta$.  We need to check that non-zero cohomology products over $M/Q$ correspond to non-zero
products over $G/P$.  But there are only two Schubert cells in $M/Q$: a point and the big cell.  Therefore
it is sufficient to check that the big cell in $M/Q$ corresponds to the big cell in $G/P$ via the inclusion
of Weyl groups $W_M \subseteq W$.  The required calculations were done using the LiE software package \cite{LiE},
and $w=t \in W_M^Q$ maps to $s_2s_1s_2s_1s_2$ in $W$, which corresponds to the big cell in $G/P$.

(5)   Let $M_1 \cong \text{G}_2 \subseteq \text{F}_4$ be the subgroup as described in the theorem.
It is easy to see that given a weight $\lambda = a\omega_1 + b\omega_2$ of $\text{F}_4$, restricting
the weight to $M_1$ we get $\lambda_{|M_1} = 3b\nu_1 + a\nu_2$.  As above, it suffices to finish the
proof of the theorem to show that for any Levi-movable product $\sigma_1^M \cdots \sigma_n^M = 1\text{[pt]}$
the corresponding product in $G/P$ is non-zero.  Lee computed \cite{LEETHS} that any such Levi-movable product
is simply the product of a class $\sigma^M$ and its dual satisfying $\sigma^M\cdot(\sigma^M)^* = 1\text{[pt]}$.
Therefore it is sufficient to check that the map $W_M^Q \rightarrow W^P$ commutes with taking
duals for each parabolic $Q$ of $M_1$.  There are two maximal parabolic subgroups, $Q_1$ and $Q_2$,
corresponding to omitting the roots $\beta_1$ and $\beta_2$, respectively.  These parabolics
are the restrictions of the parabolics $P_4$ and $P_1$ of $G$, omitting $\alpha_4$ and $\alpha_1$,
respectively.  Tables \ref{G2Table}, \ref{G2F4Table}, and \ref{F4Table} list the sets of minimal
coset representatives, their duals, and the correspondence between $W_M^Q$ and $W^P$, which finishes
the proof of the theorem.  The dual computations are reproduced from \cite{LEETHS}, and the rest
was computed using the LiE system.
\end{proof}

Corollary \ref{MainCor} follows from the description of the weights in the above proof.  Finally,
we prove the projection theorem using the same strategy.

\begin{proof}[Proof of Theorem \ref{ProjThm}]
The proof is essentially identical to part (2) of the proof of Theorem \ref{MainThm}.
Again it suffices to consider the case when $G=\text{Sp}(2r)$ and $M=\text{Sp}(2(r-1))\times\text{SL}(2)$.
For $w \in W_M^Q$, we want to compare 
$\langle \omega_P, w^{-1}\lambda \rangle = \langle \omega_P, w^{-1}\sum_i a_i \omega_i \rangle$ with 
$\langle \omega_P, w^{-1}\pi(\lambda) \rangle = \langle \omega_P, w^{-1}(\sum_{i=1}^{r-2} a_i \nu_i + (a_{r-1} + a_r)\nu_{r-1}) \rangle$.  They differ
by $\langle \omega_P, w^{-1} a_r \nu_r \rangle$, and since $\nu_r = \frac{1}{2}\alpha_r$, this quantity
is zero.  Therefore given $\vec{\lambda} \in \econe{n}{G}$ and  a Levi-movable product 
$\sigma_{w_1}^M \cdots \sigma_{w_n}^M = \text{[pt]}$, as above we have
$\sum_i \langle \omega_P, w_i^{-1} \pi(\lambda_i) \rangle = \sum_i \langle \omega_P, w_i^{-1} \lambda_i \rangle \leq 0$,
finishing the proof.
\end{proof}

\renewcommand{\arraystretch}{1.25}

\begin{table}[!h]
\begin{center}
\begin{tabular}{|R|R||R|R|}
\hline
w \in W_{G_2}^{Q_1} & \text{Dual} & w \in W_{G_2}^{Q_2} & \text{Dual} \\
\hline 
e     & 12121 & e     &  21212   \\
1     & 2121  & 2     &  1212  \\
21    & 121   & 12    &  212  \\
121   & 21    & 212   &  12  \\
2121  & 1     & 1212  &  2  \\
12121 & e     & 21212 &  e  \\
\hline
\end{tabular}
\end{center}
\caption{$G_2$ Schubert cells}
\label{G2Table}
\end{table}

\begin{table}[!h]
\begin{center}
\begin{tabular}{|R|R||R|R|}
\hline
w \in W_{G_2}^{Q_1} & \text{Image in } W_{F_4}^{P_4} & w \in W_{G_2}^{Q_2} & \text{Image in } W_{F_4}^{P_1} \\
\hline 
e      & e               & e     &  e   \\
1      & 43234           & 2     &  1  \\
21     & 143234          & 12    &  2324321  \\
121    & 232143234       & 212   &  12324321  \\
2121   & 1232143234      & 1212  &  23214321324321  \\
12121  & 432132343213234 & 21212 &  123214321324321  \\
\hline
\end{tabular}
\end{center}
\caption{$G_2$ to $F_4$ Weyl data}
\label{G2F4Table}
\end{table}

\begin{table}[!h]
\begin{center}
\begin{tabular}{|R|R||R|R|}
\hline
w \in W_{F_4}^{P_4} & \text{Dual} & w \in  W_{F_4}^{P_1} & \text{Dual} \\
\hline 
e               & 432132343213234 &  e               & 123214321324321\\
43234           & 1232143234      &  1               & 23214321324321\\
143234          & 232143234       &  2324321         & 12324321 \\
232143234       & 143234          &  12324321        & 2324321\\
1232143234      & 43234           &  23214321324321  & 1 \\
432132343213234 & e               &  123214321324321 & e  \\
\hline
\end{tabular}
\end{center}
\caption{$F_4$ Schubert cells}
\label{F4Table}
\end{table}

%%%%%%%%%%%%%%  BIBLIOGRAPHY  %%%%%%%%%%%%%%
\bibliographystyle{acm}
\bibliography{ConfRed}

\end{document}